\newtheorem{thm}{Theorem}[section]
\newtheorem{cor}[thm]{Corollary}
\newtheorem{lem}[thm]{Lemma}
\newtheorem{prop}[thm]{Proposition}
\theoremstyle{definition}
\newtheorem{question}[thm]{Question}
\theoremstyle{remark}
\newtheorem{rem}[thm]{Remark}
\newtheorem{example}{Example}
\numberwithin{equation}{section}
\newcommand{\eps}{\varepsilon}
\newcommand{\R}{\mathbb{R}}
\newcommand{\N}{\mathbb{N}}
\DeclareMathOperator{\spann}{span}
\DeclareMathOperator{\diam}{diam}
\DeclareMathOperator{\Ext}{Ext}
\newcommand{\pten}{\ensuremath{\widehat{\otimes}_\pi}}
\begin{document}
\setcounter{tocdepth}{1}


\title{Octahedral norms in free Banach lattices}

\author[Dantas]{Sheldon Dantas}
\address[Dantas]{Department of Mathematics, Faculty of Electrical Engineering, Czech Technical University in Prague, Technick\'a 2, 166 27, Prague 6, Czech Republic \newline
	\href{http://orcid.org/0000-0001-8117-3760}{ORCID: \texttt{0000-0001-8117-3760} } }
\email{\texttt{gildashe@fel.cvut.cz}}

\author[Mart\'inez-Cervantes]{Gonzalo Mart\'inez-Cervantes}
\address[Mart\'inez-Cervantes]{Universidad de Murcia, Departamento de Matem\'{a}ticas, Campus de Espinardo 30100 Murcia, Spain
\newline
\href{http://orcid.org/0000-0002-5927-5215}{ORCID: \texttt{0000-0002-5927-5215} } }	
	
\email{gonzalo.martinez2@um.es}

\author[Rodr\'iguez Abell\'an]{Jos\'e David Rodr\'iguez Abell\'an}
\address[Rodr\'iguez Abell\'an]{Universidad de Murcia, Departamento de Matem\'{a}ticas, Campus de Espinardo 30100 Murcia, Spain 	\newline
	\href{https://orcid.org/0000-0002-2764-0070}{ORCID: \texttt{0000-0002-2764-0070} }}

\email{josedavid.rodriguez@um.es}

\author[Rueda Zoca]{Abraham Rueda Zoca}
\address[Rueda Zoca]{Departamento de An\'alisis Matem\'atico, 18071, Granada, Spain
	\newline
	\href{https://orcid.org/0000-0003-0718-1353}{ORCID: \texttt{0000-0003-0718-1353} }}
\email{\texttt{abrahamrueda@ugr.es}}
\urladdr{\url{https://arzenglish.wordpress.com}}

\thanks{S. Dantas was supported by the project OPVVV CAAS CZ.02.1.01/0.0/0.0/16\_019/0000778 and by the Estonian Research Council grant PRG877. G. Mart\'inez-Cervantes and J. D. Rodr\'iguez Abell\'an were supported by the project MTM2017-86182-P (Government of Spain, AEI/FEDER, EU) and the project 20797/PI/18 by Fundaci\'{o}n S\'{e}neca, ACyT Regi\'{o}n de Murcia. The research of G. Mart\'inez-Cervantes has been co-financed by the European Social Fund (ESF) and the Youth European Initiative (YEI) under the Spanish Seneca Foundation (CARM).
J. D. Rodr\'iguez Abell\'an was supported by FPI contract of Fundaci\'on S\'eneca, ACyT Regi\'{o}n de Murcia.
The research of A. Rueda Zoca was supported by MICINN (Spain) Grant PGC2018-093794-B-I00 (MCIU, AEI, FEDER, UE), by Junta de Andaluc\'ia Grant A-FQM-484-UGR18 and by Junta de Andaluc\'ia Grant FQM-0185}

\keywords{Banach lattice; free Banach lattice, octahedral norms; almost square; diameter two properties}

\subjclass[2010]{46B04, 46B20, 46B42}

\begin{abstract}
In this paper, we study octahedral norms in free Banach lattices $FBL[E]$ generated by a Banach space $E$. We prove that if $E$ is an $L_1(\mu)$-space, a predual of von Neumann algebra, a predual of a JBW$^*$-triple, the dual of an $M$-embedded Banach space, the disc algebra or the projective tensor product under some hypothesis, then the norm of $FBL[E]$ is octahedral. We get the analogous result when the topological dual $E^*$ of $E$ is almost square. We finish the paper by proving that the norm of the free Banach lattice generated by a Banach space of dimension $ \geq 2$ is nowhere Fr\'echet differentiable. Moreover, we discuss some open problems on this topic.
\end{abstract}

\maketitle

\section{Introduction}

Recently, a new concept was introduced by A. Avil\'es, J. Rodr\'iguez, and P. Tradacete in order to establish strong connections between Banach lattices and Banach spaces \cite{ART18}. If $E$ is a Banach space, then the free Banach lattice generated by $E$ is a Banach lattice that contains a subspace which is linearly isometric to $E$ and such that the elements of this new space can be seen as lattice-free generators (see Section \ref{sect:notation} for a precise definition). Since then, it has been intensively studied for the purpose of better understanding how the vector space, the order, and the topological structures are related. In particular, in \cite{ART18} they answered a question of J. Diestel, providing an example of a Banach lattice which is weakly compactly generated as a lattice but not as a Banach space. Namely, they proved that the free Banach lattice generated by $\ell_p(\Gamma)$ contains an isomorphic copy of $\ell_1(\Gamma)$ whenever $1 < p \leq 2$ and, therefore, it is not weakly compactly generated whenever $\Gamma$ is uncountable \cite[Theorem 5.4]{ART18}. Notice that it is weakly compactly generated as a lattice since the canonical copy of $\ell_p(\Gamma)$ inside it generates the whole space. Surprisingly, later on it was proved that the free Banach lattice generated by $\ell_p(\Gamma)$ is weakly compactly generated whenever $2<p<\infty$  \cite{ATI19}.

In general, it is not clear what properties a free Banach lattice can inherit from its generator Banach space. In fact, it is a difficult task to deal with problems involving geometric aspects of the free Banach lattice generated by a Banach space. This is so because a good description of its elements is not yet known; although we do have an explicit formula for its norm, it requires somehow an extra effort when it comes to geometric properties and, in particular, when it comes to the study of octahedrality of the norm. Recall that the norm of a Banach space $E$ is said to be octahedral if, for every finite-dimensional subspace $Z$ of $E$ and every $\eps > 0$, there exists $x \in S_E$ such that $\|z + \lambda x\| \geq (1 - \eps)(\|z\| + |\lambda|)$ for every $z \in Z$ and $\lambda \in \R$. That is, every finite-dimensional subspace $Z$ of $E$ has a $1$-dimensional complement $V$ in $E$ in such a way that, when one sees $Z\oplus V$ in $E$, it is roughly speaking an $\ell_1$-sum up to $\varepsilon$. Indeed, $\ell_1$ has octahedral norm. Also, it is well-known that if $E$ has the Daugavet property, then both $E$ and $E^*$ have octahedral norm.

From our point of view, the difficulty of dealing with the norm of a free Banach lattice should be compared to the projective norm on the projective tensor product between Banach spaces. According to P.~Tradacete (private communication), analogously to the projective norm, free Banach lattices do not have a good behavior when it comes to isometric subspaces in general. It seems to be reasonable, therefore, to try transferring the techniques from tensor products into this new context by looking at the examples where octahedrality of the norm in projective tensor products has a good behavior (see \cite{llr1, llr2, RZ}). For instance, it has been recently proven in \cite[Theorem 4.3]{llr2} that if $E$ is a non-reflexive $L$-embedded space and $F$ is a Banach space such that either $E^{**}$ or $F$ has the metric approximation property, then $E \pten F$ has an octahedral norm (this result should be compared to our Theorem \ref{theo:cuninxInfiinfi}); also, \cite[Corollary 2.9]{llr1} says that, under some conditions on a Banach space $E$, the projective norm on $E^* \pten F^*$ is octahedral when $E$ is almost square (this should be compared to our Theorem \ref{theo:dualASQ}); finally, by \cite[Proposition 4.1]{llr2}, if $F$ is a finite-dimensional Banach space and $E \pten F$ has an octahedral norm, then so does $E$ (this should be compared, which turns out to be an open question, to Question \ref{finite}).

Let us briefly describe the content of this article. In Section \ref{sect:notation}, we introduce the necessary notation and background. We also prove some results which relate the structures of both $FBL[E]$ and $FBL[E^{**}]$; these have direct implications in the following section. In Section \ref{sect:centrainfi}, we prove Theorem \ref{theo:cuninxInfiinfi}, which turns out to be one of the main results of the paper. This provides a large class of Banach spaces $E$ for which the norm of $FBL[E]$ is octahedral such as $L_1(\mu)$-spaces, preduals of von Neumann algebras, preduals of JBW$^*$-triples, duals of an $M$-embedded Banach spaces or some projective tensor product spaces (see Example \ref{exam:aplicunin}). In Section \ref{sect:ASQ}, we prove the other main result of the paper, Theorem \ref{theo:dualASQ}, where we obtain that the norm of $FBL[E]$ is octahedral whenever $E$ is a Banach space with almost square dual. Finally, we finish the paper with Section \ref{sect:opeque} by presenting some open questions related to the topic and some remarks. Let us point out that we prove in Proposition \ref{prop:bigccslice} that if $E$ is a finite-dimensional Banach space with dimension $\geq2$, then all the convex combinations of $w^*$-slices of the unit ball of $FBL[E]^*$ has diameter bigger than a fixed constant (depending only on $E$), which suggests that the norm of $FBL[E]$ can be octahedral even when $E$ is finite-dimensional with $\dim(E)\geq 2$ (see Question \ref{finite}). As a consequence, the norm of the free Banach lattice generated by a Banach space of dimension $ \geq 2$ is nowhere Fr\'echet differentiable.

\section{Notation and Preliminary Results} \label{sect:notation} 

In this paper, we consider only {\bf real} Banach spaces. Given a Banach space $E$, $B_E$ (respectively, $S_E$) stands for the closed unit ball (respectively, the unit sphere) of $E$. We denote by $E^*$ the topological dual space of $E$. By a {\it slice} of $B_E$, we mean a subset of the form
\begin{equation*} 
S(B_E,f,\alpha):=\{x\in B_E: f(x)>1-\alpha\},
\end{equation*} 
where $f\in S_{E^*}$ and $\alpha>0$. If $E$ is a dual space, say $E=F^*$, by a {\it $w^*$-slice} of $B_{E^*}$, we mean a slice of the form $S(B_E,y,\alpha)$ with $y\in F$. As we already have mentioned in the introduction, the norm of a Banach space $E$ is said to be  {\it octahedral} if for every $\eps>0$ and every finite-dimensional subspace $Z$ of $E$, there is $x \in S_E$ such that 
\begin{equation*} 
\|z+\lambda x\| \geq (1-\varepsilon)(\|z\| +|\lambda|),
\end{equation*}  
for every $z\in Z$ and $\lambda \in \R$ (see \cite{dgz}). Let us notice also that a Banach space $E$ has an octahedral norm if and only if given $x_1,\ldots, x_n\in S_E$ and $\varepsilon>0$, we can find $x\in S_E$ such that $\Vert x_i+x\Vert>2-\varepsilon$, for every $i\in\{1,\ldots, n\}$ (see \cite[Proposition 2.1]{hlp}). We will make use of this fact throughout the whole paper without any explicit mention.

We say that $E$ satisfies the \textit{strong diameter two property} (SD2P, for short), if every convex combination of slices of the closed unit ball of $E$ has diameter two. If $E$ is itself a dual Banach space, then the \textit{$w^*$-strong diameter two property} ($w^*$-SD2P, for short) can be defined as usual just invoking convex combinations of $w^*$-slices of the unit ball of $E$. Examples of Banach spaces which satisfy the strong diameter two properties are infinite-dimensional uniform algebras \cite{nywe}, Banach spaces satisfying Daugavet property \cite{sh} or non-reflexive $M$-embedded spaces \cite{gines}.

It is known that the norm on a Banach space $E$ is octahedral if and only if $E^*$ satisfies the $w^*$-SD2P (see \cite[Theorem 2.1]{blrjfa}). It is also known that a Banach space $E$ has an equivalent octahedral norm if and only if $E$ contains an isomorphic copy of $\ell_1$ (see \cite{G}). Very recently, the previous result was improved in \cite{ll}, where it was proved that if a Banach space $E$ is separable and contains an isomorphic copy of $\ell_1$, then there exists an equivalent renorming of $E$ such that the bidual norm is octahedral.

\bigskip

Now, let us describe formally the objects we are working with in the paper. If $X, Y$ are Banach lattices, we say that an operator $T \colon X \longrightarrow Y$ is a {\it Banach lattice homomorphism}, or simply, a {\it lattice homomorphism}, if it preserves the lattice operations, that is, $T(x \wedge y) = T(x) \wedge T(y)$ and $T(x \vee y) = T(x) \vee T(y)$ for every $x, y \in X$. If $E$ is a Banach space, then the {\it free Banach lattice generated by $E$} is a Banach lattice $X$ together with a bounded operator $\phi_E \colon E \longrightarrow X$ with the property that for every Banach lattice $Y$ and every bounded operator $T\colon E \longrightarrow Y$, there is a unique Banach lattice homomorphism $\hat{T}\colon X \longrightarrow Y$ such that $T = \hat{T} \circ \phi_E$ and $\|\hat{T}\| = \|T\|$. Note that this definition generalizes the notion of a free Banach lattice generated by a set with no extra structure (see \cite[Corollary 2.8]{ART18}). Let $x \in E$ be given and denote by $\delta_x\colon E^* \longrightarrow \R$ the {\it evaluation function} given by $\delta_x(x^*) = x^*(x)$ for every $x^* \in E^*$. For a function $f\colon E^* \longrightarrow \R$, we define the norm
\begin{equation*} \|f\|_{FBL[E]} := \sup \left\{ \sum_{i=1}^n |f(x_i^*)|: n \in \N, x_1^*, \ldots, x_n^* \in E^*, \sup_{x \in B_E} \sum_{i=1}^n |x_i^*(x)| \leq 1 \right\}.
\end{equation*}

\bigskip
The following remark will play an important role throughout this paper and will be used without any explicit reference.
\begin{rem}
\label{RemSupBall}
Given $x_1^*,\ldots, x_n^*\in E^*$, the condition $\sup \sum_{i=1}^n \vert x_i^*(x)\vert\leq 1$ over all elements $x$ in $B_E$ is equivalent to the condition that $\Vert \sum_{i=1}^n \xi_i x_i^*\Vert\leq 1$ holds for every choice of signs $\xi_1,\ldots, \xi_n\in \{-1,1\}$.
\end{rem} 

 By \cite[Theorem 2.4]{ART18}, the free Banach lattice generated by the Banach space $E$ is the closure of the vector lattice generated by the set $\{\delta_x: x \in E\}$ in $\R^{E^*}$ under the above norm. In other words, $FBL[E]$ is the Banach lattice generated by $\{\delta_x: x \in E\}$ in the Banach lattice $\R^{E^*}$ with the norm $\|\cdot\|_{FBL[E]}$, the pointwise order, and the pointwise operations. Therefore, given a Banach space $E$, it is always possible to construct the associated free Banach lattice. We would like to highlight that the natural identification of $E$ in $FBL[E]$ is given by the map $\phi_E: E \longrightarrow FBL[E]$ defined by $x \mapsto \delta_x$ for every $x \in E$ (it is a linear isometry between $E$ and its image in $FBL[E]$). Moreover, all the functions in $FBL[E]$ are positively homogeneous (i.e., $f(\lambda x^*)=\lambda f(x^*)$ for every $\lambda \geq0$ and every $x^* \in E^*$) and $w^*$-continuous when restricted to $B_{E^*}$. 
 A function $f\colon E^* \longrightarrow \R$ is said to \textit{depend on finitely many coordinates} $x_1,\ldots,x_n \in E$ if $f(x^*)=f(y^*)$ whenever $x^*(x_i)=y^*(x_i)$ for every $i\leq n$. Notice that each $\delta_x$ depends only on one coordinate (just $x$). Since every function in $FBL[E]$ can be approximated by a finite lattice linear combination of elements of the form $\delta_{x_i}$, the following holds true.
 \begin{rem}
 \label{RemDensidadFuncionesQueDependenDeUnaCantidadFinitaDeCoordenadas}
 Every function in $FBL[E]$ can be approximated by a function depending on finitely many coordinates.
 \end{rem}

For every functional $x^* \colon E \longrightarrow \R$, we can define the functional $\delta_{x^*} \colon FBL[E] \longrightarrow \R$ given by the formula $\delta_{x^*}(f)=f(x^*)$ for every $f\in FBL[E]$. Since the functions of $FBL[E]$ are $w^*$-continuous, it follows that the map $\iota \colon E^* \longrightarrow FBL[E]^*$ defined by $x^* \mapsto \delta_{x^*}$ for every $x^* \in E^*$ is $w^*$-to-$w^*$ continuous, although it is not linear. 
Throughout this paper we use the following lower and upper bounds for a linear combination of elements of this form.
\begin{lem}
\label{LemNormInequalities}
Let $x_1^*,\ldots,x_n^* \in E^*$ and $a_1,\ldots,a_n \in \R$. Then,
\begin{eqnarray*} 
\| a_1x_1^* + \ldots + a_nx_n^* \| &\leq& \| a_1\delta_{x_1^*}+\ldots+ a_n\delta_{x_n^*} \|_{FBL[E]^*} \\
&\leq& \max\{\| \mu_1 a_1x_1^*+\ldots+ \mu_n a_nx_n^* \|:\mu_1,\ldots,\mu_n \in \{-1,1\} \}.
\end{eqnarray*} 
\end{lem}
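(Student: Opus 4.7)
My plan is to prove the two inequalities separately, using only the definition of the $FBL[E]$-norm together with Remark \ref{RemSupBall}.

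For the lower bound, I would exploit the fact that the canonical map $\phi_E : x \mapsto \delta_x$ is a linear isometry from $E$ into $FBL[E]$, so $\delta_x \in B_{FBL[E]}$ whenever $x \in B_E$. Since the action of $\delta_{x^*}$ on $\delta_x$ is simply $\delta_{x^*}(\delta_x) = \delta_x(x^*) = x^*(x)$, I would compute
\begin{equation*}
\left\| \sum_{i=1}^n a_i \delta_{x_i^*} \right\|_{FBL[E]^*} \;\geq\; \sup_{x \in B_E} \left| \sum_{i=1}^n a_i x_i^*(x) \right| \;=\; \left\| \sum_{i=1}^n a_i x_i^* \right\|,
\end{equation*}
which gives the desired lower bound in one line.

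For the upper bound, I would take an arbitrary $f \in B_{FBL[E]}$ and control $|\sum a_i f(x_i^*)|$. Using positive homogeneity of $f$ and the triangle inequality,
\begin{equation*}
\left| \sum_{i=1}^n a_i f(x_i^*) \right| \;\leq\; \sum_{i=1}^n |a_i||f(x_i^*)| \;=\; \sum_{i=1}^n |f(|a_i| x_i^*)|.
\end{equation*}
Now set $C := \max\{\| \sum \mu_i a_i x_i^* \| : \mu_i \in \{-1,1\}\}$. By Remark \ref{RemSupBall} applied to the functionals $|a_i| x_i^* / C$, the condition $\sup_{x \in B_E} \sum_i |(|a_i|x_i^*/C)(x)| \leq 1$ is equivalent to $\|\sum \xi_i |a_i| x_i^*/C\| \leq 1$ for all signs, which is exactly what the definition of $C$ guarantees (after absorbing the signs $\mathrm{sgn}(a_i)$ into the $\mu_i$'s). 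Therefore, applying the definition of $\|f\|_{FBL[E]}$ to the collection $\{|a_i| x_i^* / C\}_{i=1}^n$ yields
\begin{equation*}
\sum_{i=1}^n |f(|a_i| x_i^*)| \;\leq\; C \, \|f\|_{FBL[E]} \;\leq\; C.
\end{equation*}
Taking the supremum over $f \in B_{FBL[E]}$ completes the upper bound.

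The proof is essentially mechanical once one unpacks the definitions; the only non-trivial step is recognizing that the quantity $\sup_{x \in B_E} \sum_i |a_i x_i^*(x)|$ agrees with $\max_\mu \|\sum \mu_i a_i x_i^*\|$, which is precisely the content of Remark \ref{RemSupBall}. I do not foresee any real obstacle, as the positive homogeneity of elements of $FBL[E]$ and the isometric embedding $\phi_E$ do all the work.
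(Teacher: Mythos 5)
Your proposal is correct and follows essentially the same route as the paper: the lower bound by testing against $\delta_x$ for $x \in B_E$ via the isometric embedding $\phi_E$, and the upper bound by normalizing the functionals $|a_i|x_i^*$ by the constant $C$, invoking Remark \ref{RemSupBall} to verify the constraint in the definition of $\|\cdot\|_{FBL[E]}$. Your handling of the signs via positive homogeneity ($|a_i||f(x_i^*)| = |f(|a_i|x_i^*)|$) is in fact slightly more careful than the paper's own write-up, but it is the same argument.
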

\begin{proof}
The first inequality follows from 
\begin{eqnarray*} 	
\| a_1\delta_{x_1^*}+\ldots+ a_n\delta_{x_n^*} \|_{FBL[E]} &\geq& \sup_{x\in B_E} \left(a_1\delta_{x_1^*}+\ldots+ a_n\delta_{x_n^*}\right)(\delta_x) \\
&=& \sup_{x\in B_E}  \left(a_1x_1^*(x)+\ldots+ a_nx_n^*(x)\right) \\
&=& \| a_1x_1^* + \ldots + a_nx_n^* \|.
\end{eqnarray*}
To prove the second inequality, set 
\begin{equation*} 
\alpha:=\max\{\| \mu_1 a_1x_1^*+\ldots+ \mu_n a_nx_n^* \|:\mu_i \in \{-1,1\} \}.
\end{equation*} 
Then, by Remark \ref{RemSupBall},
$$  \sup_{x \in B_E} \sum_{i=1}^n \frac{|a_ix_i^*(x)|}{\alpha}  =1 .$$
Thus, for any $f\in FBL[E]$, by the definition of the norm in $FBL[E]$, we have 
\begin{eqnarray*} 
\|f\|_{FBL[E]} \geq \sum_{i=1}^n a_if\left(\frac{x_i^*}{\alpha} \right) &=& \frac{1}{\alpha} \sum_{i=1}^n a_if(x_i^*) \\
&=&\frac{1}{\alpha} (a_1\delta_{x_1^*}+\ldots+ a_n\delta_{x_n^*})(f).
\end{eqnarray*} 
So, $\| a_1\delta_{x_1^*}+\ldots+ a_n\delta_{x_n^*} \|_{FBL[E]^*} \leq \alpha$ and this proves the second inequality.
\end{proof}

\bigskip
Recall that, for a Banach space $E$, we say that a subset $A$ of $B_{E^*}$ is {\it norming} if, for every $x \in E$, we have $\|x\| = \sup \{|\varphi(x)|: \varphi \in A \}$. It follows from the definition of the norm $\|\cdot\|_{FBL[E]}$ that the set 
\begin{equation}\label{equa:norming}
A:=\left\{ \sum_{i=1}^n \xi_i \delta_{x_i^*}: x_i^*\in E^*, \xi_i\in \{-1,1\},  \sup\limits_{x\in B_E}  \sum_{i=1}^n \vert x_i^*(x)\vert\leq 1 \right\} \subseteq B_{FBL[E]^*}
\end{equation}
is norming for $FBL[E]$. By a separation argument, we have that $\overline{co}^{w^*}(A)=B_{FBL[E]^*}$, where $\overline{co}^{w^*}(A)$ denotes the $w^*$-closed convex hull of $A$. Indeed, using the fact that, for every $x^*\in E^*$ and every nonnull $\lambda\in \R$, we have that $\lambda \delta_{x^*} = \xi \delta_{|\lambda|x^*}$, where $\xi$ is the sign of $\lambda$, it follows that $A$ is a convex set. Thus, indeed, $$ \overline{A}^{w^*}=\overline{co}^{w^*}(A)=B_{FBL[E]^*}.$$

Next we prove that the set $A$, although algebraically written in the same way as above but now viewed as a subset of $FBL[E^{**}]^*$, is norming for this space.

\begin{lem}\label{lemma:normanbidu}
Let $E$ be a Banach space. Consider the set 
\begin{equation*} 
A:= \left\{\sum_{i=1}^n \xi_i\delta_{x_i^*}: x_i^*\in E^*, \xi_i\in \{-1,1\}, \sup\limits_{x\in B_E} \sum_{i=1}^n \vert x_i^*(x)\vert\leq 1 \right\}.
\end{equation*} 
Then, we have that
\begin{itemize}
	\item[(1)] $A$ is a subset of $FBL[E^{**}]^*$.
	\item[(2)] $A$ is norming for $FBL[E^{**}]$.
	\item[(3)] $FBL[E]$ is isometric to a sublattice of $FBL[E^{**}]$.  
\end{itemize}
\end{lem}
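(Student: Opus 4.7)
The plan has three parts. For (1), I would invoke Lemma~\ref{LemNormInequalities} applied with $E^{**}$ in place of $E$, which will give, for any $a=\sum_{i=1}^n\xi_i\delta_{x_i^*}\in A$ with $x_i^*\in E^*\subseteq E^{***}$, the bound
\[
\|a\|_{FBL[E^{**}]^*}\leq\max_{\mu\in\{-1,1\}^n}\Bigl\|\sum_i\mu_i\xi_i x_i^*\Bigr\|_{E^{***}}=\max_{\eta\in\{-1,1\}^n}\Bigl\|\sum_i\eta_i x_i^*\Bigr\|_{E^*},
\]
via the isometric inclusion $E^*\hookrightarrow E^{***}$. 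Remark~\ref{RemSupBall} will then bound this maximum by $1$ using the condition $\sup_{x\in B_E}\sum_i|x_i^*(x)|\leq 1$ defining $A$, so $A\subseteq B_{FBL[E^{**}]^*}$.

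The bulk of the work is (2), and my key tool will be the \emph{Principle of Local Reflexivity} (PLR) applied to the Banach space $E^*$, whose bidual is $E^{***}$. Given $f\in FBL[E^{**}]$ and $\varepsilon>0$, I would first use Remark~\ref{RemDensidadFuncionesQueDependenDeUnaCantidadFinitaDeCoordenadas} to replace $f$ by a function $\tilde f\in FBL[E^{**}]$ depending on finitely many coordinates $y_1^{**},\ldots,y_m^{**}\in E^{**}$ with $\|f-\tilde f\|_{FBL[E^{**}]}<\varepsilon$, and then pick (via the defining formula of the norm) $u_1^{***},\ldots,u_n^{***}\in E^{***}$ satisfying $\sup_{y^{**}\in B_{E^{**}}}\sum_i|u_i^{***}(y^{**})|\leq 1$ and $\sum_i|\tilde f(u_i^{***})|>\|\tilde f\|-\varepsilon$. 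Applying PLR to $E^*$ with $V:=\mathrm{span}\{u_i^{***}\}_{i=1}^n\subset E^{***}$, $W:=\mathrm{span}\{y_j^{**}\}_{j=1}^m\subset E^{**}$ and a small $\delta>0$ will produce a linear map $T\colon V\to E^*$ with $\|T\|\leq 1+\delta$ and $\langle T(u_i^{***}),y_j^{**}\rangle=\langle u_i^{***},y_j^{**}\rangle$ for all $i,j$. I will then set $u_i^*:=T(u_i^{***})/(1+\delta)\in E^*$, which ensures $\max_\eta\|\sum_i\eta_i u_i^*\|_{E^*}\leq 1$, so that $a:=\sum_i\xi_i\delta_{u_i^*}\in A$ for $\xi_i:=\mathrm{sgn}(\tilde f(u_i^{***}))$. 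Positive homogeneity of $\tilde f$ together with its dependence only on the $y_j^{**}$ will give $\tilde f(u_i^*)=\tilde f(u_i^{***})/(1+\delta)$, hence $a(\tilde f)\geq(\|\tilde f\|-\varepsilon)/(1+\delta)$; combining with $|a(f)-a(\tilde f)|\leq\varepsilon$ and taking $\delta$ small will yield $|a(f)|\geq\|f\|-O(\varepsilon)$, proving that $A$ is norming for $FBL[E^{**}]$.

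For (3), my plan is to derive it from (2) via the universal property: the canonical inclusion $E\hookrightarrow E^{**}$ induces a Banach lattice homomorphism $\hat T\colon FBL[E]\to FBL[E^{**}]$ of norm at most $1$, acting at the function level by $\hat T(g)(x^{***})=g(x^{***}|_E)$. For $a=\sum_i\xi_i\delta_{x_i^*}\in A$ with $x_i^*\in E^*$, the identification $x^*|_E=x^*$ (under $E^*\hookrightarrow E^{***}$) will give $a(\hat T(g))=a(g)$, where the right-hand side views $a$ as a functional in $FBL[E]^*$. Since $A$ is norming both for $FBL[E]$ (by~\eqref{equa:norming}) and for $FBL[E^{**}]$ (by~(2)), I will conclude $\|\hat T(g)\|_{FBL[E^{**}]}=\|g\|_{FBL[E]}$, so $\hat T$ is an isometric lattice embedding. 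The hardest part will be the correct invocation of PLR in (2): the map $T$ must \emph{simultaneously} match the $u_i^{***}$'s on the finitely many chosen vectors $y_j^{**}$ and respect the joint norm condition $\sup_\eta\|\sum_i\eta_i u_i^*\|_{E^*}\leq 1+\delta$ required to place $a$ in $A$; a functional-by-functional Goldstine approximation would not give the coordinated norm control that is essential here.
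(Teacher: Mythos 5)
Your proposal is correct and follows essentially the same route as the paper's proof: item (1) via the norm inequalities, item (2) via the Principle of Local Reflexivity applied to $E^*$ with the span of the chosen functionals and the span of the finitely many coordinates on which the (approximating) function depends, and item (3) via the universal property combined with the fact that $A$ is norming for both $FBL[E]$ and $FBL[E^{**}]$. The only cosmetic differences are that you spell out the $\varepsilon$-approximation by a function depending on finitely many coordinates and the norm bound in (1), which the paper treats as immediate.
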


\begin{proof} Item (1) is immediate from the definition. Let us prove (2). Pick $f\in FBL[E^{**}]$ depending on finitely many coordinates, say $\{x_1^{**},\ldots, x_k^{**}\}$. Let $\varepsilon>0$ be given. By Remark \ref{RemDensidadFuncionesQueDependenDeUnaCantidadFinitaDeCoordenadas}, it is enough to find $a\in A$ with $a(f)>1-\varepsilon$. To this end, take an element $\sum_{i=1}^n \xi_i \delta_{x_i^{***}}$ in the norming set 
\begin{equation*} 
\left\{\sum_{i=1}^n \xi_i\delta_{x_i^{***}}: x_i^{***}\in E^{***}, \xi_i\in \{-1,1\}, \sup\limits_{x^{**}\in B_{E^{**}}} \sum_{i=1}^n \vert x_i^{***}(x^{**})\vert\leq 1 \right\} 
\end{equation*} 
such that 
\begin{equation*} 
\sum_{i=1}^n \xi_i f(x_i^{***})>\Vert f\Vert-\varepsilon. 
\end{equation*} 
Notice that 
$$\left\Vert \sum_{i=1}^n \sigma_i x_i^{***}\right\Vert\leq 1$$
holds for every choice of signs $\sigma_1,\ldots, \sigma_n\in \{-1,1\}$. Define $H:= \spann \{x_1^{***},\ldots, x_n^{***}\}$ and $F:=\spann\{x_1^{**},\ldots, x_k^{**}\}$, and pick any $\eta>0$. By the Principle of Local Reflexivity (see, for instance, \cite[Theorem 9.15]{fab}), we can find an operator $T\colon H \longrightarrow E^*$ such that 
\begin{enumerate}
\item $(1-\eta)\Vert e\Vert\leq \Vert T(e)\Vert\leq (1+\eta)\Vert e\Vert$ holds for every $e\in H$ and
\item $x^{**}(T(x^{***}))=x^{***}(x^{**})$ holds for every $x^{***}\in H$ and every $x^{**}\in F$.
\end{enumerate}
Define $x_i^*:=T(x_i^{***})$. On the one hand, notice that given any choice of signs $\sigma_1,\ldots, \sigma_n\in \{-1,1\}$, we get that
$$\left\Vert \sum_{i=1}^n \sigma_i x_i^*\right\Vert=\left\Vert T\left(\sum_{i=1}^n \sigma_i x_i^{***}\right)\right\Vert\leq (1+\eta)\left\Vert \sum_{i=1}^n \sigma_i x_i^{***}\right\Vert\leq 1+\eta.$$
This implies that 
\begin{equation*} 
a:=\sum_{i=1}^n \xi_i \delta_{\frac{x_i^*}{1+\eta}}\in A. 
\end{equation*} 
Moreover, notice that $x_i^*=x_i^{***}$ on $F$. Since $f$ depends on the coordinates $x_1^{**},\ldots, x_k^{**}$, we get that $f(x_i^*)=f(x_i^{***})$ holds for every $i\in\{1,\ldots, n\}$. Finally, since $f$ is positively homogeneous, we get that
$$a(f)=\frac{\sum_{i=1}^n \xi_i f(x_i^{***})}{1+\eta}>\frac{1-\varepsilon}{1+\eta}.$$

Let us now prove item (3). Let $i$ be the canonical isometric embedding of $E$ into $E^{**}$. Consider $\phi_{E^{**}}\colon E^{**} \longrightarrow FBL[E^{**}]$ the canonical isometry. By the definition of $FBL[E]$, there exists a unique lattice homomorphism $T \colon  FBL[E] \longrightarrow FBL[E^{**}]$ such that $T \circ \phi_E = \phi_{E^{**}}\circ i$ and $\|T\| = \|\phi_{E^{**}}\circ i\| = 1$. 

\vspace{0.2cm}
\noindent
{\bf Claim}: $T$ is an isometry. 
\vspace{0.2cm}

Indeed, first notice that since $T \circ \phi_E = \phi_{E^{**}}\circ i$, we have that $T(\delta_x) = \delta_x$ for every $x \in E$. Let $a := \sum_{i=1}^n \xi_i \delta_{x_i^*} \in A$ be given. Notice that if $f \in \{ \delta_x: x \in E\}$, then 
	\begin{equation*}
	a(T(f)) = a(f),
	\end{equation*} 
	which implies that $a(T(f)) = a(f)$ for every $f$ which is in the vector lattice generated by $\{\delta_x: x\in E\}$. Now, we use the fact that $A$ is norming for both $FBL[E]$ and $FBL[E^{**}]$ to see that
	\begin{equation*}
	\|T(f)\|_{FBL[E^{**}]} = \sup_{a \in A} a(T(f)) = \sup_{a \in A} a(f) = \|f\|_{FBL[E]} 
	\end{equation*} 
	holds for every $f$ in the vector lattice generated by $\{\delta_x: x\in E\}$. This implies that $T$ is an isometry in a dense subspace of $FBL[E]$. Then, $T$ is an isometry in the whole space.
\end{proof}

By item (3) of Lemma \ref{lemma:normanbidu} we can identify, in a canonical way, $FBL[E]$ as an isometric sublattice of $FBL[E^{**}]$, for every Banach space $E$. We will make use of this fact without any further explicit reference throughout the paper.

\bigskip

The following lemma will be useful to inherit an octahedrality condition on a free Banach lattice $FBL[E]$ from $FBL[E^{**}]$. This result should be compared to the fact that the norm of $X$ is octahedral if the norm of $X^{**}$ is octahedral (it follows, for instance, from the Principle of Local Reflexivity). This will play an important role in the proof of Theorem \ref{theo:cuninxInfiinfi} in the next section.

\begin{lem} \label{lemma:octabaja} Let $E$ be a Banach space, $f_1,\ldots, f_k\in S_{FBL[E]}$, and $\eps>0$. Assume that there exists $x^{**} \in B_{E^{**}}$ such that
\begin{equation*} 
\Vert f_i+\delta_{x^{**}}\Vert_{FBL[E^{**}]}>2-\varepsilon.
\end{equation*} 	
Then, there exists $x\in B_E$ such that
\begin{equation*} 
\Vert f_i+\delta_x\Vert_{FBL[E]}>2- \varepsilon,
\end{equation*} 
holds for every $i\in\{1,...,k\}$.
\end{lem}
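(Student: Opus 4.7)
The plan is to exploit a subtle strength of Lemma~\ref{lemma:normanbidu}(2): it asserts that the set $A$, built out of elements $\delta_{x^*_j}$ with $x^*_j\in E^*$ (rather than in the larger $E^{***}$), is already norming for $FBL[E^{**}]$. That is exactly what removes the need for an additional principle-of-local-reflexivity step: once the witnessing functionals $a_i$ for $\|f_i+\delta_{x^{**}}\|_{FBL[E^{**}]}$ can be chosen with coordinates in $E^*$, Goldstine's theorem alone will suffice to replace $\delta_{x^{**}}$ by $\delta_x$ for some $x\in B_E$, keeping the same $a_i$'s.

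Concretely, I would first use the strict inequalities together with the finiteness of $\{1,\ldots,k\}$ to fix some $\delta>0$ with $\|f_i+\delta_{x^{**}}\|_{FBL[E^{**}]}>2-\varepsilon+\delta$ for every $i$. Lemma~\ref{lemma:normanbidu}(2) then yields, for each $i$, a witness $a_i=\sum_{j=1}^{n_i}\xi_{i,j}\delta_{x^*_{i,j}}\in A$ with $x^*_{i,j}\in E^*$ such that
\[
a_i(f_i+\delta_{x^{**}})=\sum_{j=1}^{n_i}\xi_{i,j}f_i(x^*_{i,j})+\sum_{j=1}^{n_i}\xi_{i,j}x^{**}(x^*_{i,j})>2-\varepsilon+\tfrac{\delta}{2}.
\]
Applying Goldstine's theorem to the finite family $\{x^*_{i,j}\}_{i,j}\subset E^*$ and the point $x^{**}\in B_{E^{**}}$ produces $x\in B_E$ with $\bigl|\sum_{j=1}^{n_i}\xi_{i,j}(x^*_{i,j}(x)-x^{**}(x^*_{i,j}))\bigr|<\delta/4$ for every $i$. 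Since $a_i\in A\subset B_{FBL[E]^*}$ by~\eqref{equa:norming}, and since the value $a_i(f_i)=\sum_j\xi_{i,j}f_i(x^*_{i,j})$ is the same whether computed inside $FBL[E]^*$ or $FBL[E^{**}]^*$ (the embedding of Lemma~\ref{lemma:normanbidu}(3) sends $\delta_x$ to $\delta_x$ and hence preserves evaluation at each $x^*\in E^*$), I would conclude
\[
\|f_i+\delta_x\|_{FBL[E]}\geq a_i(f_i+\delta_x)>2-\varepsilon+\tfrac{\delta}{4}>2-\varepsilon.
\]

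There is no deep obstacle here, and that is really the point: the hard bookkeeping associated with passing between $E^*$ and $E^{***}$ has already been packaged inside the proof of Lemma~\ref{lemma:normanbidu}(2), where the principle of local reflexivity is invoked. What remains is the clean observation that Goldstine is enough once the norming witnesses live over $E^*$. The only technical care needed is to track that the same formal expression $\sum_j\xi_{i,j}\delta_{x^*_{i,j}}$ represents a functional of norm at most one on both free Banach lattices and evaluates identically on $f_i\in FBL[E]\hookrightarrow FBL[E^{**}]$; this is immediate from items~(1) and~(3) of Lemma~\ref{lemma:normanbidu} together with \eqref{equa:norming}.
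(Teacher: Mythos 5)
Your proof is correct and follows essentially the same route as the paper: both use Lemma~\ref{lemma:normanbidu}(2) to produce norming witnesses in $A$ with coordinates in $E^*$ and then the $w^*$-density of $B_E$ in $B_{E^{**}}$ (Goldstine) to replace $x^{**}$ by some $x\in B_E$ on those finitely many functionals. Your explicit $\delta$-margin and single application of Goldstine to the whole finite family $\{x^*_{i,j}\}_{i,j}$ is, if anything, slightly tidier than the paper's net argument, which fixes $i$ first and leaves the uniform choice of $x$ over all $i\in\{1,\ldots,k\}$ implicit.
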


\begin{proof} Let $i \in \{1, \ldots, k\}$ be fixed and assume that there exists $x^{**} \in B_{E^{**}}$ such that $\|f_i + \delta_{x^{**}}\|_{FBL[E^{**}]} > 2 - \eps$. Then, by Lemma \ref{lemma:normanbidu}, there exists $\sum_{j=1}^{n_i} \xi_{ij} \delta_{x_{ij}^*} \in A$ such that
\begin{equation*}
\sum_{j=1}^{n_i} \xi_{ij} (f_i(x_{ij}^*) + \delta_{x^{**}}(x_{ij}^*))=\sum_{j=1}^{n_i} \xi_{ij} (f_i(x_{ij}^*) + x^{**}(x_{ij}^*)) > 2 - \eps.
\end{equation*}
By using the $w^*$-denseness of $B_E$ in $B_{E^{**}}$, we may find a net $(x_s) \stackrel{w^*}{\longrightarrow} x^{**}$. By the previous convergence, we get that
$$\sum_{j=1}^{n_i} \xi_{ij} (f_i(x_{ij}^*) + x_s(x_{ij}^*))\longrightarrow \sum_{j=1}^{n_i} \xi_{ij} (f_i(x_{ij}^*) + x^{**}(x_{ij}^*))>2-\eps.$$
Hence, we can find $x=x_s$, for $s$ large enough, so that 
\begin{equation*} 
\sum_{j=1}^{n_i} \xi_{ij} (f_i(x_{ij}^*) + x(x_{ij}^*))>2-\eps. 
\end{equation*} 
Consequently,
\begin{eqnarray*}
\|f_i + \delta_x\|_{FBL[E]} &\geq& \sum_{j=1}^{n_i} \xi_{ij} (f_i(x_{ij}^*) + \delta_x(x_{ij}^*)) \\
&=& \sum_{j=1}^{n_i} \xi_{ij} (f_i(x_{ij}^*) + x(x_{ij}^*)) > 2 -  \eps,
\end{eqnarray*}
as desired.
\end{proof}

\section{Octahedral norms in free Banach lattices in terms \\ of the Cunningham algebra} \label{sect:centrainfi}

Let us start this section by enunciating the main result followed by some examples that show which Banach spaces satisfy the conditions of the theorem. Afterwards, we give the necessary definitions and background in order to prove Theorem \ref{theo:cuninxInfiinfi}. We follow the notation from \cite{BR}; given a Banach space $E$, we denote by $C(E)$ the Cunningham algebra of $E$ (see the paragraph preceding  Theorem \ref{theo:cunialgespa} for a formal definition) and by $E^{(\infty}$ the completion of the normed space $\bigcup_{n=0}^{\infty} E^{(2n}$, where $(E^{(2n})_{n=0}^{\infty}$ is the sequence of even duals such that $E \subseteq E^{**} \subseteq E^{(4} \subset \ldots \subseteq E^{(2n} \subseteq \ldots$.

\begin{thm} \label{theo:cuninxInfiinfi} Let $E$ be a Banach space and suppose that $C(E^{(\infty})$ is infinite-dimensional. Then, given $f_1,\ldots, f_n\in S_{FBL[E]}$ and $\varepsilon>0$, there exists an element $x\in B_E$ such that
\begin{equation*} 
\|f_i+\delta_x\|_{FBL[E]} > 2- \eps 
\end{equation*} 
holds for every $i\in\{1,\ldots, n\}$. In particular, the norm of $FBL[E]$ is octahedral. 	
\end{thm}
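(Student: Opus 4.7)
The plan is to first locate the desired element in $B_X$ with $X := E^{(\infty}$, and then descend to $B_E$ via iterated application of Lemma \ref{lemma:octabaja}. More precisely, applying Lemma \ref{lemma:normanbidu}(3) iteratively gives isometric embeddings $FBL[E^{(2k}] \hookrightarrow FBL[X]$ for every $k \geq 0$, so it suffices to find $x^\infty \in B_X$ with $\|f_i + \delta_{x^\infty}\|_{FBL[X]} > 2 - \varepsilon/2$: any such $x^\infty$ can be approximated in norm by some $x_k \in B_{E^{(2k}}$, transferring the estimate to $FBL[E^{(2k}]$, after which $k$ successive applications of Lemma \ref{lemma:octabaja} produce the required $x \in B_E$.

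By Remark \ref{RemDensidadFuncionesQueDependenDeUnaCantidadFinitaDeCoordenadas} I may assume each $f_i$ depends on a finite collection of coordinates $\{y_{ij}\}_j \subset X$. The hypothesis that $C(X)$ is infinite-dimensional means that the Boolean algebra of $L$-projections on $X$ is infinite, so one can extract a sequence $(Q_n)$ of nonzero, pairwise orthogonal $L$-projections. Because each partial sum $Q_1 + \cdots + Q_N$ is itself an $L$-projection, $\sum_{n=1}^N \|Q_n y\| = \|(Q_1 + \cdots + Q_N)y\| \leq \|y\|$ for every $y \in X$, whence $\|Q_n y_{ij}\| \to 0$ as $n \to \infty$ for each $(i,j)$. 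For a parameter $\eta > 0$ to be fixed later, pick $n$ large enough that $\|Q_n y_{ij}\| < \eta$ for every $(i,j)$, and choose $e_n \in Q_n(X)$ with $\|e_n\| = 1$; the candidate is $x^\infty := e_n$.

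Setting $\tilde y_{ij} := (I - Q_n) y_{ij} \in (I - Q_n)(X)$, we have $\|y_{ij} - \tilde y_{ij}\| = \|Q_n y_{ij}\| < \eta$, so if $\tilde f_i \in FBL[(I - Q_n)(X)]$ denotes the function obtained from $f_i$ by substituting each $\delta_{y_{ij}}$ with $\delta_{\tilde y_{ij}}$ inside the lattice expression defining $f_i$, Lipschitz continuity of the lattice operations yields $\|f_i - \tilde f_i\|_{FBL[X]} < C\eta$ for a constant $C$ depending only on the $f_i$'s. Using that $X = (I-Q_n)(X) \oplus_1 Q_n(X)$ is an $L$-decomposition, that $\tilde f_i$ depends only on coordinates in the first summand, and that $e_n$ lies in the second, a direct computation with the defining supremum of $\|\cdot\|_{FBL[X]}$ should yield the additivity formula
\begin{equation*}
\|\tilde f_i + \delta_{e_n}\|_{FBL[X]} = \|\tilde f_i\|_{FBL[(I - Q_n)(X)]} + \|e_n\|,
\end{equation*}
from which $\|f_i + \delta_{e_n}\|_{FBL[X]} > 2 - O(\eta) > 2 - \varepsilon/2$ follows for $\eta$ small enough. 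The main obstacle I anticipate is precisely this additivity formula: the upper bound requires splitting any tuple $(x_j^*)$ according to the $L$-decomposition $X^* = ((I-Q_n)(X))^* \oplus_\infty (Q_n(X))^*$ and showing that the $X$-constraint $\sup_{u \in B_X}\sum_j |x_j^*(u)| \leq 1$ forces the two components to separately satisfy analogous constraints, so that the contribution of $\tilde f_i$ and that of $\delta_{e_n}$ decouple.
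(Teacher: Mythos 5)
Your proposal is correct in outline and reaches the same conclusion, but the core construction is genuinely different from the paper's. The paper reduces to the case where $C(X)$ is infinite-dimensional for $X=E^{(\infty}$ and then works on the \emph{dual} side: since $Z(X^*)\cong C(X)$ is infinite-dimensional, $X^*$ is represented as a function module over an infinite base space $K$; the functionals $x_{ij}^*$ witnessing $\|f_i\|\approx 1$ are perturbed by Urysohn functions $h_n$ supported on disjoint open sets (using that $h_n\to 0$ weakly in $\mathcal{C}(K)$) and glued to an extreme point of $B_{X^*}$, which produces the desired companion element as a functional $\phi\in B_{X^{**}}$, so an extra application of Lemma \ref{lemma:octabaja} is needed to drop from $X^{**}$ to $X$. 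You instead work on the space side with an infinite antichain of $L$-projections $(Q_n)$ on $X$, perturb the finitely many \emph{coordinates} of the $f_i$ into $(I-Q_n)(X)$, and take a unit vector $e_n\in Q_n(X)$ directly; this avoids the centralizer/function-module machinery and the passage through $X^{**}$ in the key step, at the cost of needing that $FBL[(I-Q_n)(X)]$ sits isometrically inside $FBL[X]$ (which holds since $I-Q_n$ is a norm-one projection, by \cite[Corollary 2.7]{ART18}, the same fact the paper uses for $E^{(2n}\subseteq E^{(\infty}$). The remaining steps (density of $\bigcup_n E^{(2n}$ in $E^{(\infty}$ and iterated use of Lemma \ref{lemma:octabaja}) coincide with the paper's.

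The obstacle you flag at the end dissolves, and in fact you need less than you state. Only the inequality $\|\tilde f_i+\delta_{e_n}\|_{FBL[X]}\geq\|\tilde f_i\|+\|e_n\|$ is required, and it follows by testing against the single combined tuple $\{(y_j^*,0)\}_j\cup\{(0,z^*)\}$, where $(y_j^*)\subset Y^*$ nearly norms $\tilde f_i$ in $FBL[Y]$ for $Y=(I-Q_n)(X)$ and $z^*\in S_{(Q_nX)^*}$ satisfies $z^*(e_n)=1$; this tuple is admissible because for $u=u_Y+u_Z\in B_X$ one has $\sum_j|y_j^*(u_Y)|+|z^*(u_Z)|\leq\|u_Y\|+\|u_Z\|\leq 1$, and it evaluates to $\sum_j|\tilde f_i(y_j^*)|+|z^*(e_n)|$ since $\delta_{e_n}$ vanishes on $Y^*\times\{0\}$ and $\tilde f_i$ vanishes on $\{0\}\times(Q_nX)^*$ by positive homogeneity. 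As for the upper bound you worry about, it is also immediate (though unnecessary): since $B_Y\cup B_{Q_nX}\subseteq B_X$, the $X$-constraint on a tuple already forces both component constraints separately. The only point to make explicit is that the approximants from Remark \ref{RemDensidadFuncionesQueDependenDeUnaCantidadFinitaDeCoordenadas} should be taken as explicit finite lattice-linear expressions in the $\delta_{y_{ij}}$, so that the substitution $y_{ij}\mapsto(I-Q_n)y_{ij}$ and its Lipschitz estimate make sense.
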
 

Let us exhibit some examples where Theorem \ref{theo:cuninxInfiinfi} applies.

\begin{example}\label{exam:aplicunin} $C(E^{(\infty})$ is infinite-dimensional (and so the norm of $FBL[E]$ is octahedral) if the Banach space $E$ satisfies one of the following conditions.
\begin{enumerate}
\item $E$ is a non-reflexive $L$-embedded Banach space (see  \cite[Proposition 3.4]{AB}). Let us recall that a Banach space $E$ is said to be \textit{$L$-embedded} if $E^{**}=E\oplus_1 Z$ for some subspace $Z$ of $E^{**}$. Examples of $L$-embedded Banach spaces are $L_1(\mu)$-spaces, preduals of von Neumann algebras, preduals of real or complex $JBW^*$-triples, duals of $M$-embedded Banach spaces or the disk algebra (we refer the reader to the references \cite[Example IV.1.1]{hww} and \cite[Proposition 2.2]{blpr} for formal definitions and details).

\vspace{0.2cm} 

\item $E$ is the projective tensor product $E_1\widehat{\otimes}_\pi E_2$ (which is the topological dual of the set of all bounded operators $\mathcal{L}(E_1, E_2^*)$) whenever $C(E_1)$ or $C(E_2)$ is infinite-dimensional (it follows from \cite[item (4), p.~851]{AB} and from the fact that, given any Banach space $F$, we have that $C(F)$ and $Z(F^*)$ are linearly isometric (see \cite[Theorems 5.7 and 5.9]{B}). For the necessary background on projective tensor products, we suggest the reader to go through Chapter 2 of \cite{rya}. 
\end{enumerate}
\end{example}

In what follows, we introduce the necessary background in order to prove Theorem \ref{theo:cuninxInfiinfi}. Given a family $\{E_i\}_{i \in I}$ of Banach spaces, we denote by $\prod_{i \in I}^{\infty} E_i$ the Banach space of elements $x \in \prod_{i \in I} E_i$ such that $\sup_{i \in I} \|x(i)\| < \infty$ endowed with the sup-norm. By a {\it function module}, we mean (the third coordinate of) a triple $(K, (E_t)_{t \in K}, E)$, where $K$ is a non-empty compact Hausdorff topological space (called the {\it base space}), $(E_t)_{t \in K}$ is a family of Banach spaces, and $E$ is a closed $\mathcal{C}(K)$-submodule of the $\mathcal{C}(K)$-module $\prod_{t \in K}^{\infty} E_t$ such that: 
\begin{itemize}
	\item[(i)] the function $t \mapsto \|x(t)\|$ from $K$ to $\R$ is upper semicontinuous for every $x \in E$.
	\item[(ii)] $E_t = \{x(t): x \in E \}$ for every $t \in K$.
	\item[(iii)] the set $\{t \in K: E_t \not= 0\}$ is dense in $K$.
\end{itemize}
We notice that $\|x\| = \sup_{t \in K} \|x(t)\|$ for every $x \in E$. We refer the reader to the book \cite[p.~75]{B} for basic results on function modules. We denote by $\mathcal{L}(E)$ the space of all bounded linear operators on $E$. By a {\it multiplier} on $E$, we mean an element $T \in \mathcal{L}(E)$ such that every extreme point of $B_{E^*}$ becomes an eigenvector for $T^*$. Then, given a multiplier $T$ on $E$ and an extreme point $p$ of $B_{E^*}$, there exists a number $a_T(p)$ satisfying 
\begin{equation*}
p \circ T = T^*(p) = a_T(p) p.
\end{equation*}
The {\it centralizer} of $E$, denoted by $Z(E)$, is defined  as the set of all multipliers on $E$ (recall that we are considering only real Banach spaces). $Z(E)$ is a closed subalgebra of $\mathcal{L}(E)$ isometrically isomorphic to $\mathcal{C}(K_E)$ for some Hausdorff compact space $K_E$. Moreover, $E$ can be seen as a function module whose base space is precisely $K_E$ and such that the elements of $Z(E)$ are precisely the operators of multiplication by the elements of $\mathcal{C}(K_E)$ (see, for instance, \cite[Theorem 4.14]{B}).

We will be making use of the following fact.

\begin{lem} \cite[Lemma 2.1]{BR} \label{extreme} Let $E$ be a Banach space. Let $(K, (E_t)_{t \in K}, E)$ be a function module and let $p$ be an extreme point of $B_E$. Then, for every $t \in K$, we have $\|p(t)\| = 1$. 
\end{lem}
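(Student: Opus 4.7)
The plan is to argue by contradiction: suppose that $\|p(t_0)\|<1$ for some $t_0\in K$ and build a nontrivial convex decomposition $p=\tfrac{1}{2}(p_++p_-)$ with $p_\pm\in B_E$ and $p_+\neq p_-$, contradicting the extremality of $p$. The construction exploits the $\mathcal{C}(K)$-module structure of $E$: if $\varphi\in\mathcal{C}(K)$ and $y\in E$, then $\varphi y\in E$ and provides a legitimate cutoff perturbation of $p$.

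Concretely, I would proceed as follows. First, using the upper semicontinuity of $t\mapsto\|p(t)\|$ given by condition~(i), locate an open neighborhood $U$ of $t_0$ and a number $\delta>0$ such that $\|p(t)\|\leq 1-\delta$ for every $t\in U$. Second, by the density condition~(iii), pick $t_1\in U$ with $E_{t_1}\neq 0$, and, by~(ii), choose $y\in B_E$ with $y(t_1)\neq 0$. Third, by normality of the compact Hausdorff space $K$, take $\varphi\in\mathcal{C}(K)$ with $0\leq\varphi\leq 1$, $\varphi(t_1)=1$, and $\supp(\varphi)\subseteq U$. Finally, set $p_\pm:=p\pm\varepsilon\varphi y$ for sufficiently small $\varepsilon>0$: on $U$ the triangle inequality yields $\|p_\pm(t)\|\leq 1-\delta+\varepsilon\leq 1$, while outside $U$ the perturbation vanishes and $\|p_\pm(t)\|=\|p(t)\|\leq 1$. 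Hence $p_\pm\in B_E$; they are distinct because $(\varphi y)(t_1)=y(t_1)\neq 0$; and their midpoint is $p$. This contradiction proves $\|p(t)\|=1$ at every $t$ where $E_t\neq 0$.

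To upgrade this conclusion to all of $K$ I would invoke upper semicontinuity once more: since $\|p(\cdot)\|=1$ on the dense set $\{t:E_t\neq 0\}$, for any $t\in K$ we have $\limsup_{s\to t}\|p(s)\|\geq 1$, which by upper semicontinuity forces $\|p(t)\|\geq 1$; combined with $\|p(t)\|\leq\|p\|=1$ this yields $\|p(t)\|=1$.

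The main obstacle I anticipate is the degenerate case $E_{t_0}=\{0\}$: there every element of $E$ vanishes at $t_0$, and no perturbation supported at $t_0$ can produce two distinct elements. This is precisely the reason why one cannot argue at $t_0$ directly, and it is exactly condition~(iii) that rescues the argument, allowing one to slide over to a nearby point $t_1$ where a nontrivial cutoff $\varphi y\in E$ is available. A minor technicality is that only upper semicontinuity of the fiber-norm is at hand, but this is just the right regularity to open the sublevel set $\{t:\|p(t)\|<1\}$ in the first step and to propagate $\|p(\cdot)\|\geq 1$ from a dense set in the last step.
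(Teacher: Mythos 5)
The paper does not actually prove this lemma --- it is quoted verbatim from \cite[Lemma 2.1]{BR} --- so there is no internal proof to compare against; your argument is correct and is essentially the standard proof of the cited result: upper semicontinuity opens up a region where $\|p(\cdot)\|\leq 1-\delta$, the density condition (iii) together with (ii) supplies a nonzero value $y(t_1)$ inside that region, the $\mathcal{C}(K)$-module structure makes the cutoff $\varphi y$ a legitimate element of $E$, and the sup formula $\|x\|=\sup_{t}\|x(t)\|$ shows $p\pm\varepsilon\varphi y\in B_E$, contradicting extremality. One small remark: your contradiction step already yields $\|p(t_0)\|=1$ for \emph{every} $t_0\in K$, not merely for those $t_0$ with $E_{t_0}\neq 0$, since the perturbation is planted at the nearby point $t_1\in U$ rather than at $t_0$ itself; consequently the closing paragraph that propagates the conclusion from the dense set $\{t: E_t\neq 0\}$ via upper semicontinuity is correct but redundant.
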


\bigskip

Next, we give a precise definition of the Cunningham algebra. For a Banach space $E$,  an {\it $L$-projection} on $E$ is a (linear) projection $P: E \longrightarrow E$ satisfying $ \Vert x
\Vert = \Vert  P(x) \Vert + \Vert x- P(x) \Vert$  for every $x \in E$.  In such a case, we will say that the subspace $P(E)$ is an {\it $L$-summand} of $E$. Let us notice that the composition of
two $L$-projections on $E$ is an $L$-projection (see \cite[Proposition~1.7]{B}). So, the closed linear subspace of $\mathcal{L}(E)$ generated by all $L$-projections on $E$ is a subalgebra of $
\mathcal{L}(E)$. This algebra, denoted by $C(E)$, is called the {\it Cunningham algebra} of $E$. It is known that $C(E)$ is linearly isometric  to $Z(E^*)$ (see \cite[Theorems 5.7 and 5.9]{B}).

\bigskip

Before giving the proof of Theorem \ref{theo:cuninxInfiinfi}, we need the following analogous result which assumes that the Cunningham algebra of the Banach space $E$ itself is infinite-dimensional.
Recall that if $(f_n)_{n\in \N}$ is a uniformly bounded sequence in   $\mathcal{C}(K)$ such that $(f_n)_{n\in \N}$ converges to zero pointwise, then $f_n$  converges to zero in the weak topology of $\mathcal{C}(K)$ (see, for instance, \cite[Theorem 12.1]{fab}). We will use this fact in the proof of Theorem \ref{theo:cunialgespa} below.

\begin{thm} \label{theo:cunialgespa} Let $E$ be a Banach space and suppose that $C(E)$ is infinite-dimensional. Then, for every $f_1,\ldots, f_k\in S_{FBL[E]}$ and every $\varepsilon>0$, there exists $x\in S_E$ such that
$$\|f_i+\delta_x\|_{FBL[E]} > 2-\varepsilon$$
holds for every $i\in\{1,\ldots, k\}$. In particular, the norm of $FBL[E]$ is octahedral. 
\end{thm}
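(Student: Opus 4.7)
The plan is to use the infinite-dimensionality of $C(E)$ to produce a unit vector $x\in S_E$ that is $L$-disjoint from the finitely many coordinates on which each $f_i$ depends. The starting point is that $C(E)$ is a commutative Banach algebra (a product of commuting $L$-projections is again an $L$-projection) generated by its idempotents, so its Gelfand representation identifies $C(E)$ with $\mathcal{C}(K)$ for some compact Hausdorff $K$, which is infinite by hypothesis and totally disconnected (characteristic functions of clopen sets can densely span $\mathcal{C}(K)$ only in this case). Extract an infinite pairwise disjoint sequence $(V_n)_{n\in\mathbb{N}}$ of non-empty clopen subsets of $K$, yielding nonzero pairwise orthogonal $L$-projections $(P_n)_{n\in\mathbb{N}}$ on $E$. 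Iterating the $L$-summand identity shows $\sum_{n}\|P_n y\|\leq \|y\|$ for every $y\in E$, hence $\|P_n y\|\to 0$.

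Given $f_1,\ldots,f_k\in S_{FBL[E]}$ and $\varepsilon>0$, by Remark \ref{RemDensidadFuncionesQueDependenDeUnaCantidadFinitaDeCoordenadas} we may assume that each $f_i$ depends on a common finite set of coordinates $y_1,\ldots,y_m\in E$. Choose almost-norming elements $a_i=\sum_{j=1}^{n_i}\xi_{ij}\delta_{x_{ij}^*}\in A$ with $a_i(f_i)>1-\varepsilon/4$, and via a pigeonhole on the finitely many sequences $(\|P_n y_l\|)_n$ pick $n_0$ with $\|P_{n_0}y_l\|<\delta$ for every $l\leq m$, where $\delta>0$ is a small parameter to be fixed below. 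Take $x\in S_{P_{n_0}(E)}$ and, by Hahn--Banach on the $L$-summand $P_{n_0}(E)$, a functional $x^*\in S_{E^*}$ with $x^*(x)=1$ and $x^*\equiv 0$ on $(I-P_{n_0})(E)$; in particular $|x^*(y_l)|=|x^*(P_{n_0}y_l)|<\delta$ for each $l$.

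Set $\tilde a_i:=\sum_{j=1}^{n_i}\xi_{ij}\delta_{(I-P_{n_0}^*)x_{ij}^*}$. Since $(I-P_{n_0})(B_E)\subseteq B_E$, we have $\tilde a_i\in A$; moreover the coordinate discrepancies $(I-P_{n_0}^*)x_{ij}^*(y_l)-x_{ij}^*(y_l)=-x_{ij}^*(P_{n_0}y_l)$ are of size at most $\delta$, so the continuity of $f_i$ on the finite-dimensional dual slice determined by $y_1,\ldots,y_m$ yields $\tilde a_i(f_i)>1-\varepsilon/3$ provided $\delta$ is chosen small enough. The $L$-summand identity $\|y\|=\|P_{n_0}y\|+\|(I-P_{n_0})y\|$ implies
\begin{equation*}
\sup_{y\in B_E}\Bigl(\sum_{j=1}^{n_i}\vert (I-P_{n_0}^*)x_{ij}^*(y)\vert+\vert x^*(y)\vert\Bigr)\leq 1,
\end{equation*}
so $\tilde a_i+\delta_{x^*}\in A$. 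Since $(I-P_{n_0}^*)x_{ij}^*$ vanishes on $P_{n_0}(E)$ and $x\in P_{n_0}(E)$, we get
\begin{equation*}
(\tilde a_i+\delta_{x^*})(f_i+\delta_x)=\tilde a_i(f_i)+f_i(x^*)+x^*(x),
\end{equation*}
and $|f_i(x^*)|$ can be made $<\varepsilon/3$ because $f_i$ is positively homogeneous with $f_i(0)=0$ and the coordinates $x^*(y_l)$ are arbitrarily small. Hence $\|f_i+\delta_x\|_{FBL[E]}>2-\varepsilon$, uniformly in $i$.

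The main obstacle is the structural step of extracting the pairwise orthogonal sequence of non-zero $L$-projections from the mere infinite-dimensionality of $C(E)$, which is precisely where the hypothesis is used; the rest is careful bookkeeping combining the $L$-direct-sum identity, positive homogeneity, and the continuity of $FBL[E]$-functions on finite-dimensional dual slices.
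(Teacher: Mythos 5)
Your proof is correct, and it takes a genuinely different route from the paper's. The paper works on the dual side: it uses $C(E)\cong Z(E^*)$ to represent $E^*$ as a function module over an infinite compact base space $K$, perturbs the norming functionals $x_{ij}^*$ towards a rescaled extreme point of $B_{E^*}$ on small Urysohn bumps $h_n$ (weak-nullity of $(h_n)$ preserving the action on $f_i$), builds the new ``direction'' as a functional $\phi\in B_{E^{**}}$ given by evaluation at a fiber, and then descends from $FBL[E^{**}]$ to $FBL[E]$ via Lemma \ref{lemma:octabaja}. You instead stay entirely inside $E$, exploiting the Boolean algebra of $L$-projections: infinite-dimensionality of $C(E)$ forces this Boolean algebra to be infinite, hence to contain an infinite antichain $(P_n)$, and the iterated $L$-identity $\sum_n\|P_ny\|\le\|y\|$ lets you pick a projection $P_{n_0}$ almost annihilating the finitely many coordinates of the $f_i$'s; the vector $x\in S_{P_{n_0}(E)}$ and the functional $x^*=g\circ P_{n_0}$ then play the roles of the paper's $\delta_x$ and of the extra summand in the norming element, with the $\ell_1$-decomposition $E=P_{n_0}(E)\oplus_1(I-P_{n_0})(E)$ giving $\tilde a_i+\delta_{x^*}\in A$ directly. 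Your route buys elementarity: no function modules, no Krein--Milman, no Principle of Local Reflexivity, and no detour through the bidual; it also makes the underlying ``$\ell_1$-orthogonality'' mechanism transparent, and it plugs into the deduction of Theorem \ref{theo:cuninxInfiinfi} exactly as the paper's version does. The small price is that you must first reduce to $f_i$ depending on finitely many coordinates (Remark \ref{RemDensidadFuncionesQueDependenDeUnaCantidadFinitaDeCoordenadas}) and invoke the $w^*$-continuity of $FBL[E]$-functions on $B_{E^*}$ to control $\tilde a_i(f_i)$ and $f_i(x^*)$ by uniform continuity of the induced function on the compact image of $B_{E^*}$ in $\R^m$ --- this is routine but should be spelled out, and the ``structural obstacle'' you flag (extracting the orthogonal family) is in fact unproblematic, since every infinite Boolean algebra contains an infinite family of pairwise disjoint non-zero elements.
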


\begin{proof} 	
	
Let $k\in\mathbb N$, $f_1,\ldots, f_k\in S_{FBL[E]}$, and $\varepsilon>0$. For every $i\in\{1,\ldots,k\}$, let $\sum_{j=1}^{n_i} \xi_{ij} \delta_{x_{ij}^*} \in A$ such that 
\begin{equation*} 
\left(\sum_{j=1}^{n_i} \xi_{ij} \delta_{x_{ij}^*}\right)(f_i)>1-\frac{\varepsilon}{2}.
\end{equation*} 
Since $Z(E^*)$ is infinite-dimensional (because $C(E)$ is infinite-dimensional), then $E^*$ can be seen as a $\mathcal{C}(K)$-function module whose base space $K$ is infinite and such that the elements of $Z(E^*)$ are precisely the operators of multiplication by the elements of $\mathcal{C}(K)$. Because of this, we may find a sequence $\{ \mathcal{O}_n\}_{n \in \N}$ of disjoint open sets in $K$. Given $n \in \N$, we take $t_n \in \mathcal{O}_n$. By the Urysohn lemma, we can find $h_n \in \mathcal{C}(K)$ such that
	\begin{equation*}
	0 \leq h_n \leq 1, \ \ \ h_n(t_n) = 1, \ \ \ \mbox{and} \ \ \ h_n|_{(L\setminus \mathcal{O}_n)} = 0.
	\end{equation*}
Let us notice that $\{h_n\}_{n \in \N}$ is a sequence of bounded functions which converges to zero pointwise. So, $h_n \stackrel{w}{\longrightarrow} 0$ in $\mathcal{C}(K)$. Consequently,
\begin{equation*}
(1 - h_n)x^* \stackrel{w}{\longrightarrow} x^*
\end{equation*}
for every $x^* \in E^*$. Now, let us note that the set of extreme points of $B_{E^*}$ is not empty by the Krein-Milman theorem, which allows us to take an extreme point $p \in \Ext(B_{E^*})$. For each $n \in \N$, let us define 
\begin{equation*}
y_{n, ij}^{*} := (1 - h_n) x_{ij}^* + h_n \sigma_{ij} \frac{p}{n_i},
\end{equation*} 
where $\eps_{ij} \sigma_{ij} = 1$ for every $i, j$.

\vspace{0.1cm}
\noindent
{\bf Claim}: For every $\eps_1, \ldots, \eps_n \in \{-1,1\}$, we have that
\begin{equation*}
\left\| \sum_{j=1}^{n_i} \eps_j y_{n,ij}^* \right\| \leq 1.
\end{equation*}

Indeed, for every $t \in K$, we have

\begin{eqnarray*}
\left\| \sum_{j=1}^{n_i} \eps_j y_{n,ij}^*(t)\right \| &=& \left\| \sum_{j=1}^{n_i} \eps_j \left( (1 - h_n(t)) x_{ij}^*(t) + h_n(t) \sigma_{ij} \frac{p(t)}{n_i} \right) \right\| \\
&\leq& |1 - h_n(t)| \left\| \sum_{j=1}^{n_i}   \eps_j x_{ij}^*(t) \right\| + |h_n(t)| \sum_{j=1}^{n_i}  \frac{\|p(t)\|}{n_i} \\
&\leq& (1 - h_n(t)) + h_n(t) \\ 
&=& 1.
\end{eqnarray*} 
Taking supremum in $t$, the claim is proved by the properties of a function module.

Now, notice that, fixed $i\leq k$ and $j \in \{1, \ldots, n_i\}$, we have that $\left(y_{n,ij}^*\right)_{n\in \N}$ converges to $x_{ij}^*$  in the $w$-topology of $E^*$, and therefore it also converges in the $w^*$-topology. This implies that
\begin{equation*}
\sum_{j=1}^{n_i} \xi_{ij} \delta_{y_{n, ij}^*}\stackrel{w^{*}}{\longrightarrow} \sum_{j=1}^{n_i} \xi_{ij} \delta_{x_{ij}^*}.
\end{equation*}
Therefore, we can fix $m\in\mathbb N$ such that 
\begin{equation} \label{eq1} 
\left(\sum_{j=1}^{n_i} \xi_{ij} \delta_{y_{m, ij}^*} \right)(f_i)>1-\frac{\varepsilon}{2}.
\end{equation}

\noindent
By Lemma \ref{extreme}, we have that $\Vert p(t_m)\Vert=1$. Thus, we can find $\varphi \in S_{E_{t_m}^*}$ such that $\varphi(p(t_m))>1-\frac{\varepsilon}{2}$. Define $\phi \colon E^*\longrightarrow \mathbb R$ by the equation
\begin{equation*} 
\phi(x^*)=\varphi (x^*(t_m)) \ \ (x^* \in E^*).
\end{equation*} 
It is clear that $\phi$ is linear, bounded, and that $\Vert \phi\Vert\leq 1$. In other words, $\phi\in B_{E^{**}}$. Hence, by Lemma \ref{lemma:normanbidu} and inequality (\ref{eq1}), we get that
\begin{equation*} 
\Vert f_i+\delta_\phi\Vert_{FBL[E^{**}]}\geq \left(\sum_{j=1}^{n_i} \xi_{ij} \delta_{y_{m, ij}^*} \right) (f_i+\delta_\phi)  > 1-\frac{\varepsilon}{2}+\sum_{j=1}^{n_i} \xi_{ij}\phi(y_{m,ij}^*),
\end{equation*}
where in the first inequality we are using that $\|\sum_{j=1}^{n_i} \xi_{ij} \delta_{y_{m, ij}^*}\| \leq 1$ by the previous claim and Lemma \ref{LemNormInequalities}.

On the other hand,
 
\begin{eqnarray*}
\sum_{j=1}^{n_i} \xi_{ij}\phi(y_{m,ij}^*) &=& \sum_{j=1}^{n_i} \xi_{ij}\varphi \left( (1-h_m)(t_m)x_{ij}^*(t_m)+h_m(t_m)\sigma_{ij} \frac{p(t_m)}{n_i} \right)\\
&=& \sum_{j=1}^{n_i}\xi_{ij} \sigma_{ij}\frac{\varphi(p(t_m))}{n_i} =\varphi(p(t_m)) > 1-\frac{\varepsilon}{2}.
\end{eqnarray*} 
Therefore, $\Vert f_i+\delta_\phi\Vert_{FBL[E^{**}]}>2-\varepsilon$. Finally, by Lemma \ref{lemma:octabaja}, there exists an element $x\in B_E$ such that $\Vert f_i+\delta_x\Vert_{FBL[E]}>2-\varepsilon$ holds for every $i\in\{1,\ldots, k\}$, as desired.
\end{proof}

Recall that $E^{(\infty}$ is the completion of the normed space $\bigcup_{n=0}^{\infty} E^{(2n}$, where $E \subseteq E^{**} \subseteq E^{(4} \subset \ldots \subseteq E^{(2n} \subseteq \ldots$.
In particular, for every $n\in \N$, we can define $\hat{P}:\bigcup_{m\geq n}^{\infty} E^{(2m} \longrightarrow E^{(2n}$
by the formula $\hat{P}(x)=x|_{E^{(2n-1}}$ for every $x\in \bigcup_{m\geq n}^{\infty} E^{(2m}$.
It is immediate that $\| \hat{P}\|=1$ and, since $E^{(\infty}$ can be seen as the completion of the normed space $\bigcup_{m\geq n}^{\infty} E^{(2m}$, $\hat{P}$ can be uniquely extended by completion to an operator $\acute{P}\colon E^{(\infty} \longrightarrow E^{(2n}$. Now, if we denote by $I\colon E^{(2n} \longrightarrow E^{(\infty}$ the canonical inclusion, then $P:=I\circ \acute{P} \colon E^{(\infty}\longrightarrow E^{(\infty}$
satisfies $P^2=P$, $P(E^{(\infty})=E^{(2n}$ and $\|P\|=1$. Thus, each $E^{(2n}$ is $1$-complemented in $E^{(\infty}$ for every $n \in \N$. It follows from \cite[Corollary 2.7]{ART18} that $FBL[E^{(2n}]$ is lattice isometric to a sublattice of $FBL[E^{(\infty}]$. Moreover, since $FBL[E]$ is lattice isometric to a sublattice of $FBL[E^{**}]$ by Lemma \ref{lemma:normanbidu}, we conclude that $FBL[E^{(2n}]$ is lattice isometric to a sublattice of $FBL[E^{(\infty}]$ for every $n \geq 0$.
Now we are ready to prove Theorem \ref{theo:cuninxInfiinfi}.

\begin{proof}[Proof of Theorem \ref{theo:cuninxInfiinfi}]  Suppose that $C(E^{(\infty})$ is infinite-dimensional. Let $k \in \N$ be fixed. Let $f_1, \ldots, f_k \in S_{FBL[E]}$ and $\eps > 0$ be given. We can see $f_i$ in $S_{FBL[E^{(\infty}]}$. Since $C(E^{(\infty})$ is infinite-dimensional, by Theorem \ref{theo:cunialgespa}, there exists $x \in S_{E^{(\infty}}$ such that $\|f_i + \delta_{x}\|_{FBL[E^{(\infty}]} > 2 - \eps$ for every $i=1,\ldots, k$. Since $\bigcup_{n=0}^{\infty} E^{(2n}$ is dense in $E^{(\infty}$, we may assume that $x \in S_{E^{(2n}}$ for some $n\in \N$. Let us denote $x$ by $x^{(2n}$ to indicate this element is in $E^{(2n}$. Therefore,
	\begin{equation*}
	\|f_i + \delta_{x^{(2n}}\|_{FBL[E^{(2n}]} > 2 - \eps
	\end{equation*}
	for every $i=1,\ldots, N$. By Lemma \ref{lemma:octabaja}, there exists $x^{(2n-2} \in B_{E^{(2n-2}}$ such that
\begin{equation*} 	
	\|f_i + \delta_{x^{(2n-2}}\|_{FBL[E^{(2n-2}]} > 2 - \eps
\end{equation*} 
holds for every $i=1,\ldots, k$. In order to finish the proof, we apply Lemma \ref{lemma:octabaja} finitely many times.
\end{proof}

\section{Dual Almost Square Banach spaces}\label{sect:ASQ}

In this section, we will be dealing with almost square Banach spaces. Recall that a Banach space $E$ is \textit{almost square} (ASQ, for short) if, for every $\{x_1,\ldots, x_n\}\subseteq S_E$ and every $\eps>0$, there exists a sequence $\left(y_k\right)_{k \in \N}$ in $S_E$ such that $\Vert x_i\pm y_k\Vert\rightarrow 1$ holds for every $i=1,\ldots,n$. Notice that the sequence $\left(y_k\right)_{k \in \N}$ can be chosen to be weakly null (actually equivalent to the $c_0$ basis (see \cite{all})). Almost squareness was introduced in \cite{all}, where an intensive study of it was done. It was proved, among other things, that ASQ spaces satisfy the SD2P and that they contain an isomorphic copy of $c_0$. In fact, later, in \cite{blrasq}, it was proved that a Banach space $E$ admits an equivalent ASQ renorming if and only if $E$ contains an isomorphic copy of $c_0$.

Under certain conditions related to the ASQ on a Banach space $E$, the projective norm on $E^* \pten F^*$ is octahedral (see  \cite[Corollary 2.9]{llr1}). The idea behind it can be translated in terms of the good behavior of the $c_0$-orthogonality in the injective tensor product, which is what happens with the free Banach lattice as we can see in the proof of the following result.

\begin{thm}\label{theo:dualASQ}
Let $E$ be a Banach space. If the dual $E^*$ is ASQ, then the norm of $FBL[E]$ is octahedral.
\end{thm}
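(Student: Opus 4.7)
The strategy is to reduce, via Lemma~\ref{lemma:octabaja}, the octahedrality of $FBL[E]$ to the following statement in the bidual: given $f_1,\ldots,f_k\in S_{FBL[E]}$ and $\varepsilon>0$, one exhibits a single $\phi\in B_{E^{**}}$ with $\|f_i+\delta_\phi\|_{FBL[E^{**}]}>2-\varepsilon$ for every $i$. Since the set $A$ of Lemma~\ref{lemma:normanbidu} is norming for $FBL[E^{**}]$, it suffices to produce, for each $i$, some $b_i\in A$ with $b_i(f_i+\delta_\phi)>2-\varepsilon$.

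First I would fix near-optimal witnesses $a_i=\sum_{j=1}^{n_i}\xi_{ij}\delta_{x_{ij}^*}\in A$ with $a_i(f_i)>1-\varepsilon/8$. I then apply the ASQ property of $E^*$ to the finite collection $F\subset B_{E^*}$ consisting of the vectors $x_{ij}^*$ together with all sign combinations $\sum_j\sigma_j x_{ij}^*$; the usual ASQ on $S_{E^*}$ extends to the ball by a routine convexity argument, so this produces a weakly null sequence $(y_m^*)\subset S_{E^*}$ with $\|u\pm y_m^*\|\leq 1+\eta_m$ for every $u\in F$, where $\eta_m\to 0$. Since $(y_m^*)$ is weakly null in $E^*$ it is in particular $w^*$-null, and because each $f_i$ is $w^*$-continuous on $B_{E^*}$ this forces $f_i(y_m^*)\to 0$.

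Choosing $m$ large enough so that $\eta_m$ and each $|f_i(y_m^*)|$ are below $\varepsilon/(8\max_i n_i)$, I would use Hahn-Banach to pick $\phi\in S_{E^{**}}$ with $\phi(y_m^*)=1$. The key quantitative observation is that from $\|x_{ij}^*\pm y_m^*\|\leq 1+\eta_m$ and $\phi(y_m^*)=1$ one immediately gets $|\phi(x_{ij}^*)|\leq\eta_m$ for every $i,j$. I then set
\begin{equation*}
b_i:=\sum_{j=1}^{n_i}\xi_{ij}\,\delta_{x_{ij}^*/(1+\eta_m)}+\delta_{y_m^*/(1+\eta_m)},
\end{equation*}
whose membership in $A$ is guaranteed by the ASQ bound $\|\sum_j\sigma_j x_{ij}^*+\tau y_m^*\|\leq 1+\eta_m$ for every choice of signs. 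Using positive homogeneity of $f_i$ together with linearity of $\phi$, a direct computation gives
\begin{equation*}
b_i(f_i+\delta_\phi)=\frac{a_i(f_i)+f_i(y_m^*)+\phi\bigl(\sum_j\xi_{ij}x_{ij}^*\bigr)+\phi(y_m^*)}{1+\eta_m}>2-\varepsilon,
\end{equation*}
and then Lemma~\ref{lemma:octabaja} yields the desired $x\in B_E$ with $\|f_i+\delta_x\|_{FBL[E]}>2-\varepsilon$ for every $i$.

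The main obstacle is arranging two competing demands simultaneously via a single $y_m^*$: keeping $b_i$ inside the norming set $A$ (which requires tight control on $\|\sum_j\sigma_j x_{ij}^*\pm y_m^*\|$), while preventing the cross-term $f_i(y_m^*)$ from spoiling the estimate on $b_i(f_i)$. Both are resolved by the ASQ quantitative control on the finite set $F$ combined with the weak nullity of the ASQ-sequence and the $w^*$-continuity of the $f_i$ on $B_{E^*}$, which forces the cross-terms to vanish in the limit.
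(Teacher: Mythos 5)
Your argument is correct, but it follows a genuinely different route from the paper. The paper never leaves the dual side: it takes a convex combination of $w^*$-slices of $B_{FBL[E]^*}$, picks elements $\sum_j\gamma_{ij}\delta_{x_{ij}^*}\in A\cap S_i$, perturbs them by $\pm\delta_{y_k^*}$ with $(y_k^*)$ the ASQ sequence, renormalizes, and shows the resulting pairs stay in the slices while their differences concentrate on $\delta_{y_k^*}$; this establishes the $w^*$-SD2P (in fact the $w^*$-SSD2P, see Remark~\ref{remark:johann}) of $FBL[E]^*$ and then invokes the duality theorem of \cite{blrjfa}. You instead verify the primal finite-set criterion for octahedrality directly, in the spirit of Theorems~\ref{theo:cunialgespa} and~\ref{theo:cuninxInfiinfi}: you norm the ASQ vector $y_m^*$ by a functional $\phi\in S_{E^{**}}$, observe that $\|x_{ij}^*\pm y_m^*\|\leq 1+\eta_m$ forces $|\phi(x_{ij}^*)|\leq\eta_m$, that weak nullity of $(y_m^*)$ plus $w^*$-continuity of the $f_i$ on $B_{E^*}$ kills the other cross term, and then descend through Lemma~\ref{lemma:octabaja}. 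All the individual steps check out: the extension of ASQ from $S_{E^*}$ to $B_{E^*}$ is indeed the routine convexity estimate $\|u\pm y\|\leq\|u\|(1+\varepsilon)+(1-\|u\|)$ (the paper uses the same fact implicitly when applying ASQ to the sign combinations $\sum_j\eta_jx_{ij}^*$), and your $b_i$ does land in $A$ by Remark~\ref{RemSupBall}. The trade-off: the paper's dual-side computation yields the strictly stronger conclusion that $FBL[E]^*$ has the $w^*$-SSD2P, whereas your argument produces an explicit witness $\delta_x$ and avoids the external duality theorem. One simplification you could make: the bidual detour is unnecessary. Instead of norming $y_m^*$ by $\phi\in E^{**}$, pick $x\in B_E$ with $y_m^*(x)>1-\delta$; then the same two inequalities give $|x_{ij}^*(x)|<\eta_m+\delta$, and the identical computation with $\delta_x$ in place of $\delta_\phi$ runs entirely inside $FBL[E]$, dispensing with Lemmas~\ref{lemma:normanbidu} and~\ref{lemma:octabaja} altogether.
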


\begin{proof} In what follows, we will prove that the dual of $FBL[E]$ satisfies the $w^*$-SD2P (see characterization \cite[Theorem 2.1]{blrjfa}). Consider a convex combination of $w^*$-slices 
\begin{equation*} 	
C=\sum_{i=1}^n \lambda_i S_i=\sum_{i=1}^n \lambda_i S(B_{FBL[E]},f_i,\alpha),
\end{equation*} 
where $S_i:=S(B_{FBL[E]},f_i,\alpha) = \{\phi\in B_{FBL[E]^*}:\phi(f)>1-\alpha\}$ with $\alpha>0$ and $f\in S_{FBL[E]}$. Let $\varepsilon>0$ be arbitrary. We need to find two elements $c_1,c_2\in C$ such that $\Vert c_1-c_2\Vert>2-\varepsilon$. To this end, fix $i\in\{1,\ldots, n\}$ and consider the set
\begin{equation*} 
A:=\left\{\sum_{i=1}^n \gamma_i \delta_{x_i^*}: x_i^* \in E^*, \gamma_i\in\{-1,1\}, \sup\limits_{x\in B_E}\sum_{i=1}^n \vert x_i^*(x)\vert\leq 1\right\}.
\end{equation*} 
Notice that, by the discussion around equation \eqref{equa:norming}, it follows that $A$ is norming for $FBL[E]$ and $\overline{A}^{w^*}=\overline{co}^{w^*}(A)=B_{FBL[E]^*}$. Since, for every $i\in\{1,\ldots, n\}$, $B_{FBL[E]^*}\setminus S_i$ is $w^*$-closed and convex, it follows that $A\cap S_i\neq \emptyset$. This means that there exists, for every $i\in\{1,\ldots, n\}$, an element $\sum_{j=1}^{n_i} \gamma_{ij} \delta_{x_{ij}^*}\in S_i\cap A$. Notice that the condition $\sup_{x\in B_E}\sum_{j=1}^{n_i} \vert x_{ij}
^*(x)\vert\leq 1$ is equivalent to the fact that $\Vert\sum_{j=1}^{n_i} \eta_j x_{ij}^*\Vert\leq 1$ holds for every choice of signs $\eta_1,\ldots, \eta_{n_i}$. Since $E^*$ is ASQ, by assumption, we can find a weakly null sequence $\left(y_k^*\right)_{k \in \N}$ such that
\begin{equation*} 
\lim\limits_{k}\left \Vert \sum_{j=1}^{n_i} \eta_j x_{ij}^*\pm y_k^*\right \Vert= 1
\end{equation*} 
for every $i\in\{1,\ldots, n\}$ and every choice of signs $\eta_j$. By Lemma \ref{LemNormInequalities}, this implies that 
\begin{equation*} 
	\lim\limits_{k}\left \Vert \sum_{j=1}^{n_i} \gamma_{ij} \delta_{x_{ij}^*}\pm \delta_{y_k^*}\right \Vert= 1
\end{equation*} 
for every $i\in\{1,\ldots, n\}$.
Moreover, notice that $y_k^*\stackrel{w}{\longrightarrow} 0$ implies that $y_k^*\stackrel{w^{*}}{\longrightarrow}0$. Therefore, $\delta_{y_k^*}\stackrel{w^{*}}{\longrightarrow} 0$. Thus, if we define
\begin{equation*} 
u_k^i:=\frac{\sum_{j=1}^{n_i} \gamma_{ij}\delta_{x_{ij}^*}+\delta_{y_k^*}}{\Vert \sum_{j=1}^{n_i} \gamma_{ij}\delta_{x_{ij}^*}+\delta_{y_k^*}\Vert},
\end{equation*} 
and
\begin{equation*} 
v_k^i:=\frac{\sum_{j=1}^{n_i} \gamma_{ij}\delta_{x_{ij}^*}-\delta_{y_k^*}}{\Vert \sum_{j=1}^{n_i} \gamma_{ij}\delta_{x_{ij}^*}-\delta_{y_k^*}\Vert},
\end{equation*} 
then $u_k^i,v_k^i$ are sequences in $S_{FBL[E]^*}$, which converge to $\sum_{j=1}^{n_i} \gamma_{ij}\delta_{x_{ij}^*}$. So, we can find $m\in\mathbb N$ such that $u_k^i,v_k^i\in S_i$ holds for every $k\geq m$ and every $i\in\{1,\ldots, n\}$. So, 
\begin{equation*} 
\diam(C)\geq \left\| \sum_{i=1}^n \lambda_i (u_k^i-v_k^i) \right\| 
\end{equation*} 
for every $k\geq m$. Let us estimate $\Vert \sum_{i=1}^n \lambda_i (u_k^i-v_k^i)\Vert$. To do so, in order to save notation, let us set 
\begin{equation*} 
\alpha_{ki}^{\pm}=\left\| \sum_{j=1}^{n_i} \gamma_{ij}\delta_{x_{ij}^*}\pm\delta_{y_k^*} \right\|.
\end{equation*} 
Now, given $k\geq m$, we get that 
\begin{equation*} 
\sum_{i=1}^n \lambda_i (u_k^i-v_k^i)=\delta_{y_k^*} \sum_{i=1}^n \lambda_i\left( \frac{1}{\alpha_{ki}^+}+\frac{1}{\alpha_{ki}^-} \right)+\sum_{i=1}^n \lambda_i\sum_{j=1}^{n_i}\gamma_{ij} \delta_{x_{ij}^*} \left( \frac{1}{\alpha_{ki}^+}-\frac{1}{\alpha_{ki}^-} \right).
\end{equation*} 
Notice that $\alpha_{ki}^\pm\rightarrow 1$ for every $i\in\{1,\ldots, n\}$. Hence,
\begin{eqnarray*}
	\left \Vert \sum_{i=1}^n \lambda_i(u_k^i-v_k^i)\right \Vert &\geq& \sum_{i=1}^n \lambda_i \left (\frac{1}{\alpha_{ki}^+}+\frac{1}{\alpha_{ki}^-}\right )-\left \Vert \sum_{i=1}^n \left \vert\frac{1}{\alpha_{ki}^+}-\frac{1}{\alpha_{ki}^-}\right \vert \lambda_i \sum_{j=1}^{n_i} \gamma_{ij} \delta_{x_{ij}^*}\right \Vert  \\
	&\geq&  \sum_{i=1}^n \lambda_i \left (\frac{1}{\alpha_{ki}^+}+\frac{1}{\alpha_{ki}^-}\right )- \sum_{i=1}^n \left \vert\frac{1}{\alpha_{ki}^+}-\frac{1}{\alpha_{ki}^-}\right \vert \lambda_i	\longrightarrow 2.
\end{eqnarray*} 
\end{proof}

\begin{rem}\label{remark:johann}
In the proof of Theorem \ref{theo:dualASQ}, we actually obtain that $FBL[E]^*$ has the $w^*$-symmetric strong diameter two property, a stronger property than the $w^*$-SD2P. Recall that a Banach space $E$ has the \textit{symmetric strong diameter two property} (\emph{SSD2P} in short) if for every $n\in\mathbb N$, every slices $S_1,\ldots, S_n$ of $B_E$ and every $\varepsilon>0$, there are $x_i\in S_i$ for every $i\in\{1,\ldots, n\}$ and there exists $\varphi\in B_E$ with $\Vert \varphi\Vert>1-\varepsilon$ such that $x_i\pm \varphi\in S_i$ for every $i\in\{1,\ldots, n\}$ (see \cite{hlln} for examples and background and \cite{RZ} for applications of this property). If $E$ is a dual Banach space, the \textit{$w^*$-symmetric strong diameter two property ($w^*$-SSD2P)} is defined in a similar way just replacing slices with $w^*$-slices. The authors of the present paper thank Johann Langemets for pointing out this remark.
\end{rem}

Let us observe that it was posed in \cite[Question 6.6]{all} as an open question whether the dual of a given Banach space $E$ can be ASQ. A positive answer has been recently given in \cite{aht}. Even though, to show that Theorem \ref{theo:dualASQ} applies to a large class of Banach spaces, notice that in \cite[Theorem 3.8]{aht} it was proved that if a Banach space $E$ satisfies that $E^*$ contains an isomorphic copy of $c_0$, then there exists an equivalent renorming of $E$, say $F$, so that $F^*$ is ASQ.

\section{Remarks and open questions} \label{sect:opeque}

In this section, we will discuss some open questions derived from our work. Recall that a Banach space $E$  is said to have the {\it Daugavet property} if every rank-one operator $T:E\longrightarrow E$ satisfies the equality
\begin{equation*}
\Vert T+I\Vert=1+\Vert T\Vert,
\end{equation*}
where $I$ denotes the identity operator on $E$. Some examples of Banach spaces enjoying the Daugavet property are $\mathcal C(K)$ for a compact Hausdorff and perfect topological space $K$, $L_1(\mu)$, and $L_\infty(\mu)$ for a non-atomic measure $\mu$ or the space of Lipschitz functions $Lip(M)$ over a metrically convex space $M$ (see \cite{ikw,kssw,sh,werner} and the references therein for details). It is known that if $E$ has the Daugavet property, then the norms of $E$ and $E^*$ are octahedral \cite[Lemmas 2.8 and 2.12]{kssw}.

In the recent paper \cite{rtv20}, it has been proved that if $E$ and $F$ are $L_1$-preduals with the Daugavet property, then $E\widehat{\otimes}_\pi F$ has the Daugavet property, based on the celebrated characterization of $L_1$-preduals given in \cite{linds} in terms of the existence of extensions of compact operators. Since the results of this paper are strongly motivated by the known results about octahedrality in tensor product spaces, we wonder the following.

\begin{question}
Let $E$ be an $L_1$-predual with the Daugavet property. Is the norm of $FBL[E]$ octahedral?
\end{question}

Even though we do not know the answer, we can at least prove the following result, that say that we can ensure octahedrality condition when dealing with positive elements in the following sense.

\begin{prop}
Let $E$ be an $L_1$-predual with octahedral norm and denote by $S_{FBL[E]}^+$ the positive elements of $S_{FBL[E]}$. Then, for every $f_1,\ldots, f_n\in S_{FBL[E]}^+$ and every $\varepsilon>0$, there exists $x\in S_E$ such that
$$\Vert f_i+\vert \delta_x\vert\Vert>2-\varepsilon.$$
\end{prop}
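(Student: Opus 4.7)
The approach is to find $x\in S_E$ so that the same finite family of functionals nearly norming each $f_i$ also nearly norms $|\delta_x|$; since the $f_i$'s are positive, this will yield the desired estimate in an additive way. First, by Remark~\ref{RemDensidadFuncionesQueDependenDeUnaCantidadFinitaDeCoordenadas}, I may assume that each $f_i$ depends on finitely many coordinates in $E$. Then, for each $i$, using $\|f_i\|=1$ and $f_i\geq 0$, I pick $x_{i,1}^*,\ldots,x_{i,n_i}^*\in E^*$ satisfying the constraint $\sup_{y\in B_E}\sum_j|x_{i,j}^*(y)|\leq 1$ with $\sum_j f_i(x_{i,j}^*)>1-\varepsilon/4$. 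The plan then reduces to finding $x\in S_E$ with $\sum_j|x_{i,j}^*(x)|>1-\varepsilon/4$ for every $i$, because
\[
\|f_i+|\delta_x|\|_{FBL[E]}\geq\sum_j\bigl(f_i(x_{i,j}^*)+|x_{i,j}^*(x)|\bigr)>2-\varepsilon/2.
\]

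For the construction of $x$, I use the two hypotheses on $E$. Since $E$ is an $L_1$-predual, $E^*\cong L_1(\mu)$ and I can approximate each $x_{i,j}^*$ in $L_1(\mu)$ by simple functions supported on a common finite measurable partition $\{A_\alpha\}_\alpha$, preserving the constraint up to a negligible rescaling. Then I look for an element $x^{**}\in S_{E^{**}}$ of the form $\sum_\alpha\epsilon_\alpha\chi_{A_\alpha}$ with $\epsilon_\alpha\in\{-1,1\}$ so that $\sum_j|x_{i,j}^*(x^{**})|$ is close to~$1$ for each $i$. Once such $x^{**}$ is produced, the Principle of Local Reflexivity, applied as in the proof of Lemma~\ref{lemma:octabaja} to the finitely many relevant functionals $x_{i,j}^*$, yields $x\in B_E$ with $x_{i,j}^*(x)$ close to $x_{i,j}^*(x^{**})$; a final normalization puts $x$ in $S_E$.

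The main obstacle is the simultaneous sign selection, that is, choosing $\epsilon_\alpha$'s which work for all $n$ collections at once. The octahedrality of $E$ enters here through its dual equivalent formulation, the $w^*$-SD2P of $E^*$; combined with the $L_1$-predual structure it allows us to refine the partition further and separate any sign-conflicting contributions in each $L_1$-sum. The positivity of the $f_i$'s is essential: since $f_i(x^*)\geq 0$ for every $x^*$, any refinement or replacement of the $x_{i,j}^*$'s by suitable aligned combinations cannot decrease $\sum_j f_i(x_{i,j}^*)$, preserving the first term of the estimate. This is precisely why the present argument requires positivity and does not settle the general case.
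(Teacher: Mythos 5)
Your opening reduction and your closing appeal to positivity share the same gap: you fix, once and for all, the norming families $\{x_{i,j}^*\}_j$ for each $f_i$ and then try to produce a \emph{single} $x$ (or $x^{**}$) with $\sum_j |x_{i,j}^*(x)|>1-\varepsilon/4$ for every $i$. Such an $x$ need not exist, even under all the hypotheses of the proposition. Take $E=\mathcal{C}[0,1]$ (an $L_1$-predual with octahedral norm), let $\delta_0,\delta_1\in M[0,1]=E^*$ denote the Dirac measures at the points $0$ and $1$, and consider $f_1=|\delta_{g_1}|$ with $g_1\equiv 1$ and $x_{1,1}^*=\frac12(\delta_0+\delta_1)$, together with $f_2=|\delta_{g_2}|$ where $g_2(0)=1$, $g_2(1)=-1$, $\|g_2\|_\infty=1$, and $x_{2,1}^*=\frac12(\delta_0-\delta_1)$. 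Both families satisfy the constraint and norm the corresponding $f_i$ exactly, but $|x_{1,1}^*(x)|\approx 1$ forces $|x(0)+x(1)|\approx 2$ while $|x_{2,1}^*(x)|\approx 1$ forces $|x(0)-x(1)|\approx 2$, which is impossible for $\|x\|_\infty\le 1$ (and equally impossible for any $x^{**}\in B_{E^{**}}$, so no choice of signs $\epsilon_\alpha$ on any partition can help). This is precisely the ``simultaneous sign selection'' obstacle you yourself identify, and the sentence asserting that octahedrality plus the $L_1$-structure ``allows us to refine the partition and separate sign-conflicting contributions'' is where the proof is missing: any such separation amounts to replacing the $x_{i,j}^*$ by new functionals, and your claim that positivity of $f_i$ guarantees $\sum_j f_i(\cdot)$ cannot decrease under such replacements is unfounded --- positive elements of $FBL[E]$ need not be subadditive on $E^*$; for instance $f=(\delta_u-|\delta_v|)^+$ can satisfy $f(x^*+z^*)>f(x^*)+f(z^*)$, so refining a functional can strictly decrease the sum.

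The paper resolves the conflict by changing the functionals rather than searching for a common near-maximizer. For each $i$ one first picks an \emph{individual} near-maximizer $x_i\in B_E$ with $\sum_j|x_{i,j}^*(x_i)|>1-\varepsilon$. Octahedrality of $E$ (used directly on $E$, not through the $w^*$-SD2P of $E^*$) produces $x\in S_E$ with $\|e+\lambda x\|\ge(1-\delta)(\|e\|+|\lambda|)$ for all $e$ in the finite-dimensional subspace $F$ on which the $f_i$ depend; this makes the map $T_i(e+\lambda x)=e+\lambda x_i$ an operator of norm at most $(1-\delta)^{-1}$ on $F\oplus\mathbb{R}x$, and the $L_1$-predual hypothesis enters through Lindenstrauss's extension theorem for compact operators, which extends each $T_i$ to all of $E$ with essentially the same norm. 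Setting $y_{i,j}^*:=x_{i,j}^*\circ T_i$ one obtains new admissible families with $f_i(y_{i,j}^*)=f_i(x_{i,j}^*)$ (because $T_i$ fixes $F$ and $f_i$ depends only on $F$) and $\sum_j|y_{i,j}^*(x)|=\sum_j|x_{i,j}^*(x_i)|>1-\varepsilon$, which gives the estimate. This operator-theoretic transport of the norming functionals is the ingredient your argument is missing; the representation $E^*\cong L_1(\mu)$ and the simple-function approximation play no role in the actual proof.
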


\begin{proof}
Let $i\in\{1,\ldots, n\}$. Consider $x_{ij}^*\in E^*$ and $\xi_{ij}\in \{-1,1\}$ such that 
\begin{equation*} 
\sum_{j=1}^{n_i} \xi_{ij} f_i(x_{ij}^*)>1-\varepsilon \ \ \ \mbox{with} \ \ \ \sup_{x\in B_E}\sum_{j=1}^{n_i} |x_{ij}^*(x)|=1. 
\end{equation*} 
Since $f_i$ is positive we deduce that 
\begin{equation*}
\sum_{j=1}^{n_i}f_i(x_{ij}^*)>1-\varepsilon.
\end{equation*} 
By a density argument, we can assume, without loss of generality, that $f_i$ depends on finitely many coordinates (see Remark \ref{RemDensidadFuncionesQueDependenDeUnaCantidadFinitaDeCoordenadas}). This means that there exists a finite-dimensional subspace $F$ of $E$ such that if $x^*|_{F}=y^*|_{F}$, then $f_i(x^*)=f_i(y^*)$ for every $i\in\{1,\ldots, n\}$. Now, let $x_i\in B_E$ be such that 
\begin{equation*} 
\sum_{j=1}^{n_i} \vert x_{ij}^*(x_i)\vert>1-\varepsilon.
\end{equation*}
Since the norm of $E$ is octahedral, we can find, for $\delta>0$, an element $x\in S_E$ such that
$$\Vert e+\lambda x\Vert\geq (1-\delta)(\Vert e\Vert+\vert \lambda\vert)$$
holds for every $e\in F$ and every $\lambda\in\mathbb R$. Define, for every $i\in\{1,\ldots, n\}$, an operator $T_i:F_x \longrightarrow E$ by the equation
$$T_i(e+\lambda x)=e+\lambda x_i,$$
where $e\in F$ and $F_x$ denotes the subspace of $E$ generated by $F$ and $x$.
We claim that $\Vert T_i\Vert\leq \frac{1}{1-\delta}$. Indeed,
$$\Vert T_i(e+\lambda x)\Vert=\Vert e+\lambda x_i\Vert\leq \Vert e\Vert+\vert \lambda\vert\leq \frac{1}{1-\delta}\Vert e+\lambda x\Vert.$$
Since $T_i\colon F_x \longrightarrow E$ is a compact operator and $E$ is an $L_1$-predual, there are compact extensions (denoted in the same way) $T_i:E\longrightarrow E$ such that $\Vert T_i\Vert\approx \frac{1}{1-\delta}$ (see \cite[Theorem 6.1]{linds}). Choosing $\delta$ small enough we can assume that $\Vert T_i\Vert\leq 1+\varepsilon$. Define $y_{ij}^*:=x_{ij}^*\circ T_i$. Let us prove that $\sup_{v\in B_E} \sum_{j=1}^{n_i}\vert y_{ij}^*(v)\vert\leq 1+\varepsilon$. To this end pick $v\in B_E$ and notice that, since $\Vert T_i\Vert\leq 1+\varepsilon$, then $\frac{T_i(v)}{1+\varepsilon}\in B_E$. Hence
$$1\geq \sum_{j=1}^{n_i}\left\vert x_{ij}^*\left(\frac{T_i(v)}{1+\varepsilon}\right)\right\vert=\frac{\sum_{j=1}^{n_i} \vert y_{ij}^*(v)\vert}{1+\varepsilon}.$$
Thus, by the definition of the norm of $FBL[E]$, this implies that $\sum_{j=1}^{n_i} \delta_{y_{ij}^*}\in (1+\varepsilon)B_{FBL[E]^*}$. Define $f:=\vert \delta_x\vert\in S_{FBL[E]}$. Since $y_{ij}^*=x_{ij}^*$ on $F$, then $f_i(x_{ij}^*)=f_i(y_{ij}^*)$. Therefore,
\begin{eqnarray*} 
(1+\varepsilon)\Vert f_i+f\Vert \geq \sum_{j=1}^{n_i}\delta_{y_{ij}^*}(f_i+f) &=& \sum_{j=1}^{n_i}(f_i(x_{ij}^*)+\vert y_{ij}^*(x)\vert)\\
&>& 1-\varepsilon+\sum_{j=1}^{n_i}\vert x_{ij}^*(x_i)\vert \\
&>& 2-2\varepsilon.
\end{eqnarray*} 
\end{proof}

Concerning the Daugavet property, it is natural to pose the following question.

\begin{question}
Does $FBL[E]$ have the Daugavet property for any Banach space $E$?
\end{question}

Let us explain why we cannot deduce the Daugavet property from our results. Let $E$ be a Banach space, take $f\in FBL[E]$, and $\varepsilon>0$. Consider the set
$$A(f,\varepsilon)=\{g\in B_{FBL[E]}: \Vert f+g\Vert>2-\varepsilon\}.$$
Then, $FBL[E]$ has the Daugavet property if and only if $\overline{co}^{w^*}(A(f,\varepsilon))=B_{FBL[E]}$ for every $f \in B_{FBL[E]}$ and $\eps > 0$ (see, e.g. \cite[Corollary 2.3]{werner}). However, in the proof of Theorem \ref{theo:cuninxInfiinfi}, the elements of $A(f,\varepsilon)$ that we exhibit are of the form $\delta_x$ for $x\in B_X$, which clearly are not enough to generate the whole unit ball by taking closed convex hull. On the other hand, let us point out that, to the best of our knowledge, it is an open problem (see \cite[Section 6, (3)]{werner}) whether the Daugavet property is stable under projective tensor products of two Banach spaces satisfying both the Daugavet property.

\bigskip 

Finally, we wonder if there are free Banach lattices which do not have octahedral norms. Since every function in $FBL[E]$ is positively homogeneous, it follows that $FBL[E]$ has dimension 2 whenever $E$ has dimension $1$. Thus, a natural example of a free Banach lattice without octahedral norm is $FBL[E]$ for $E$ one-dimensional. Nevertheless, if $\dim(E)\geq 2$, then $FBL[E]$ is infinite-dimensional. For this reason, we wonder the following.

\begin{question} \label{finite} 
Let $E$ be a finite-dimensional Banach space of dimension $\geq2$. Is the norm of $FBL[E]$ octahedral? 
\end{question}

Even though we do not know the answer for Question \ref{finite}, let us point out that convex combination of slices of $B_{FBL[E]^*}$ cannot have arbitrarily small diameter.

\begin{prop}\label{prop:bigccslice}
	Let $E$ be a finite-dimensional Banach space with dimension $n\geq 2$. Let $\beta$ be the Banach-Mazur distance between $\ell_1(n)$ and $E$ and set $\alpha:=\frac{2}{ne^\beta}$. Then, every convex combination of $w^*$-slices of the dual ball $B_{FBL[E]^*}$ has diameter greater or equal than $\alpha$.
\end{prop}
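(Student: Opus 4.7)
\emph{Plan.} My approach would be to exploit the Banach--Mazur hypothesis to fix a controlled basis of $E$, and then, for each slice of the convex combination, construct explicit perturbed elements using the dual basis so that their aggregated differences admit a lower bound via Lemma~\ref{LemNormInequalities}.

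First, from the definition of the Banach--Mazur distance $\beta$ between $\ell_1(n)$ and $E$, I would fix an isomorphism realizing the distance, and use it to produce a basis $e_1,\ldots,e_n$ of $E$ with $\|e_k\|=1$ and scalar inequalities of the form $\|\sum_k a_k e_k\|_E \leq \sum_k|a_k|$ together with the converse $\sum_k|a_k| \leq \beta \|\sum_k a_k e_k\|_E$. The dual basis $e_1^*,\ldots,e_n^*$ then satisfies $\max_k|b_k| \leq \|\sum_k b_k e_k^*\|_{E^*} \leq \beta\max_k|b_k|$, and Lemma~\ref{LemNormInequalities} gives $\|\delta_{e_k^*}\|_{FBL[E]^*}=\|e_k^*\|_{E^*}$, which is bounded above by $\beta$ and below by $1$.

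Second, I would write the convex combination as $C=\sum_{i=1}^m\lambda_iS_i$ with $S_i=S(B_{FBL[E]^*},f_i,\varepsilon_i)$, and use that the norming set $A$ from \eqref{equa:norming} is $w^*$-dense in $B_{FBL[E]^*}$, together with the $w^*$-continuity of $\phi\mapsto\phi(f_i)$, to select, for each $i$ and arbitrarily small $\gamma>0$, an element $\phi_i=\sum_j\xi_{ij}\delta_{x_{ij}^*}\in A\cap S_i$ with $\phi_i(f_i)>1-\gamma$. I would then, for each $i$, choose an index $k_i\in\{1,\ldots,n\}$ and a sign $\sigma_i\in\{-1,+1\}$ with $\sigma_if_i(e_{k_i}^*)\geq 0$; these choices guarantee that the signed perturbation $\phi_i+\tfrac{\sigma_i}{ne^\beta}\delta_{e_{k_i}^*}$ does not decrease the value at $f_i$, and a direct application of Lemma~\ref{LemNormInequalities} combined with the basis estimates shows the perturbed element (after normalization by at most $1+\frac{1}{n}$) still lies in $S_i$, provided $\gamma$ was chosen small enough. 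Assembling these yields $c_1,c_2\in C$ of the form $c_1=\sum_i\lambda_i\phi_i^+$ and $c_2=\sum_i\lambda_i\phi_i^-$.

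Third, the norm $\|c_1-c_2\|$ would be estimated via Lemma~\ref{LemNormInequalities}: since the difference $c_1-c_2$ is essentially $\tfrac{2}{ne^\beta}\sum_i\lambda_i\sigma_i\delta_{e_{k_i}^*}$, its $FBL[E]^*$-norm is bounded below by $\tfrac{2}{ne^\beta}\,\|\sum_i\lambda_i\sigma_ie_{k_i}^*\|_{E^*}$, and by the dual basis estimate this in turn is bounded below by $\tfrac{2}{ne^\beta}\max_k|\sum_{i:k_i=k}\lambda_i\sigma_i|$. A pigeonhole over the $n$ basis directions, using $\sum_i\lambda_i=1$, guarantees that some direction $k^*$ accumulates sufficient weight to make this bound at least $\tfrac{2}{ne^\beta}$, giving the required $\alpha$.

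\emph{Main obstacle.} The main obstacle will be reconciling the sign constraints with the pigeonhole alignment: the signs $\sigma_i$ are determined by the slice-membership requirement (they must respect the sign of $f_i(e_{k_i}^*)$), so it is not immediate that they align in the pigeonhole direction without cancellation. The delicate point is that each slice offers $2n$ independent perturbation choices (indices times signs), and a careful combinatorial selection, together with the freedom to take the two constructions $\phi_i^\pm$ with opposite orientations, should absorb the sign cancellations and recover the factor $\tfrac{2}{ne^\beta}$.
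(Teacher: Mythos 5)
There are two genuine gaps in your argument, and they are structural rather than technical. First, the perturbation-plus-normalization step does not preserve slice membership. Your perturbed element $\phi_i+\tfrac{\sigma_i}{ne^\beta}\delta_{e_{k_i}^*}$ generically has norm larger than $1$, so it must be renormalized by a factor that can be as large as $1+\tfrac{1}{n}$; since nothing forces $f_i(e_{k_i}^*)$ to be large (it can vanish for every $k$), the value at $f_i$ after renormalization can drop to roughly $\tfrac{n}{n+1}\phi_i(f_i)$, which falls outside a slice $S(B_{FBL[E]^*},f_i,\varepsilon_i)$ whenever $\varepsilon_i<\tfrac{1}{n+1}$. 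Choosing $\gamma$ small does not help, because $\phi_i(f_i)\le 1$ regardless. The only ways out are to shrink the perturbation with $\varepsilon_i$ (which destroys the uniform lower bound $\alpha$) or to have an ASQ-type ``orthogonality'' guaranteeing $\|\phi_i\pm\delta\|\approx\|\phi_i\|$, which is not available here. Second, even granting slice membership, your final estimate does not reach $\alpha$: the bound $\|\sum_i\lambda_i\sigma_ie_{k_i}^*\|_{E^*}\ge\max_k|\sum_{i:k_i=k}\lambda_i\sigma_i|$ combined with a pigeonhole over $n$ directions yields at best $\tfrac{1}{n}$ (hence diameter $\tfrac{2}{n^2e^\beta}$), and reaching $1$ requires all slices to share a common admissible direction \emph{and} sign, which the constraints $\sigma_if_i(e_{k_i}^*)\ge0$ do not guarantee; you flag this yourself as unresolved, and it is indeed where the argument breaks.

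The underlying issue is that you only use the first inequality of Lemma~\ref{LemNormInequalities}, i.e.\ the linear lower bound $\|\sum_i\gamma_i\delta_{x_i^*}\|_{FBL[E]^*}\ge\|\sum_i\gamma_ix_i^*\|_{E^*}$, which permits massive cancellation. The paper's proof rests instead on the de Pagter--Wickstead identification of $FBL[\ell_1(n)]$ with $\mathcal{C}(S_{\ell_\infty(n)})$, which shows (Lemmas~\ref{LemNorma} and~\ref{ThmNormaEfinitedimensional}) that for \emph{pairwise independent} functionals $x_1^*,\ldots,x_m^*$ one has the much stronger $\ell_1$-type bound $\|\sum_i\gamma_i\delta_{x_i^*}\|\ge\tfrac{1}{ne^\beta}\sum_i|\gamma_i|\,\|x_i^*\|$. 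With this in hand no perturbation by a fixed vector is needed: one simply picks, in each $S_i$, two elements of the norming set $A$ whose defining functionals are slightly moved so that all of them (across both elements and all slices) are pairwise independent; their difference then automatically has norm at least $\tfrac{1}{ne^\beta}\cdot 2=\alpha$, with no sign alignment or pigeonhole required. I would suggest reworking your argument around such an $\ell_1$-lower bound for combinations of evaluation functionals at independent points rather than around a common perturbation direction.
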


\bigskip

Following the notation of \cite[Definition 1.10]{dgz}, recall that the norm of a Banach space $E$ is said to be \textit{$\varepsilon$-rough} if, for every $x\in E$, it follows that
$$\limsup\limits_{\Vert h\Vert\rightarrow 0}\frac{\Vert x+h\Vert+\Vert x-h\Vert-2\Vert x\Vert}{\Vert h\Vert}\geq \varepsilon.$$
We say that the norm of $E$ is \textit{rough} if it is $\varepsilon$-rough for some $\varepsilon>0$. Notice that roughness condition means ``uniformly nowhere Fr\'echet differentiable''. 
\cite[Proposition 1.11]{dgz} states that Banach spaces which do not have $w^*$-slices of arbitrarily small diameter are rough. 
Thus, the following corollary follows from Proposition \ref{prop:bigccslice}.

\begin{cor}\label{Cororudez}
		Let $E$ be a finite-dimensional Banach space with dimension $n\geq 2$. Then, the norm of $FBL[E]$ is rough. In particular, it is nowhere Fr\'echet differentiable.
\end{cor}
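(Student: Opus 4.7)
The plan is to construct a fixed perturbation element $\psi\in B_{FBL[E]^*}$ of norm at least $\alpha/2$, drawn from the Banach--Mazur identification of $E$ with $\ell_1(n)$, and then to exhibit two points of any convex combination $C=\sum_{i=1}^m\lambda_i S_i$ of $w^*$-slices whose difference equals $\pm 2\psi$, so that $\diam(C)\geq 2\|\psi\|\geq\alpha$.

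To build $\psi$, we fix an isomorphism $T\colon E\to \ell_1(n)$ with $\|T\|=1$ and $\|T^{-1}\|\leq e^\beta$. Let $\{e_k\}_{k=1}^n$ denote the canonical basis of $\ell_1(n)$ and $\{\epsilon_k\}_{k=1}^n$ its dual basis in $\ell_\infty(n)$. Setting $w_k:=T^{-1}(e_k)\in E$ and $w_k^*:=T^*(\epsilon_k)\in E^*$ yields a biorthogonal system $(w_k,w_k^*)$ with $\|w_k^*\|\leq 1$, and the key identity $\sum_{k=1}^n |w_k^*(x)|=\|T(x)\|_1\leq 1$ for every $x\in B_E$. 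For each sign vector $\sigma\in\{-1,+1\}^n$, the element
\begin{equation*}
\psi_\sigma:=\sum_{k=1}^n\sigma_k\,\delta_{w_k^*/n}
\end{equation*}
therefore lies in the norming set $A$ from~\eqref{equa:norming}, and Lemma~\ref{LemNormInequalities} together with the estimate $\|\sum_k\sigma_k w_k^*\|_{E^*}=\|T^*(\sigma)\|_{E^*}\geq 1/e^\beta$ (coming from $\|T^{-1}\|\leq e^\beta$) yields the two-sided bound
\begin{equation*}
\frac{1}{ne^\beta}\;\leq\;\|\psi_\sigma\|_{FBL[E]^*}\;\leq\;\frac{1}{n}.
\end{equation*}

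Now, given $C=\sum_{i=1}^m\lambda_i S(B_{FBL[E]^*},f_i,\alpha_i)$, the $w^*$-density of $A$ in $B_{FBL[E]^*}$ lets us pick, for each $i$, an element $\phi_i\in A\cap S_i$ with $\phi_i(f_i)$ arbitrarily close to~$1$. The candidate pair is then $u_i^\pm:=(1-\tfrac{1}{n})\phi_i\pm\psi_\sigma$. The bound $\|\psi_\sigma\|\leq 1/n$ combined with $\|\phi_i\|\leq 1$ keeps $u_i^\pm$ in $B_{FBL[E]^*}$, and the crucial feature is that the differences $u_i^+-u_i^-=2\psi_\sigma$ do not depend on~$i$. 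Provided that $\sigma$ can be chosen so that $u_i^\pm\in S_i$ for every $i$ simultaneously, the convex combinations $c^\pm:=\sum_i\lambda_i u_i^\pm\in C$ satisfy $c^+-c^-=2\psi_\sigma$, giving $\diam(C)\geq 2\|\psi_\sigma\|\geq\alpha$, as required.

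The main obstacle is precisely this simultaneous choice of $\sigma$: the slice condition, tracked through the shrinkage, reduces in the limit $\phi_i(f_i)\to 1$ to the requirement $|\psi_\sigma(f_i)|<\alpha_i-\tfrac{1}{n}$ for every $i$, which is comfortable for $\alpha_i\gg 1/n$ but tight otherwise. We plan to tackle this via a Khintchine-type averaging over uniformly random $\sigma\in\{-1,+1\}^n$: since $\mathbb{E}_\sigma[\psi_\sigma(f_i)]=0$ with variance of order $1/n$, a suitably chosen sign vector should make $|\psi_\sigma(f_i)|$ uniformly small across~$i$. For very thin slices that escape this strategy, we expect to need a more delicate construction exploiting the nonlinearity of the map $x^*\mapsto\delta_{x^*}$ (for instance, using that combinations like $\delta_{y^*}+\delta_{-y^*}$ vanish on the linear part of $FBL[E]$ and therefore provide perturbations transverse to the slices determined by $\delta(E)$) in order to maintain the lower bound $\alpha$ uniformly in the slice parameters.
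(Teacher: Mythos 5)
There is a genuine gap, and you have correctly located it yourself: the simultaneous choice of $\sigma$ cannot be made for thin slices, and neither of your proposed remedies closes the hole. Your candidate points are $u_i^{\pm}=(1-\tfrac1n)\phi_i\pm\psi_\sigma$, so membership in $S_i=S(B_{FBL[E]^*},f_i,\alpha_i)$ forces, even in the ideal limit $\phi_i(f_i)\to 1$, the inequality $|\psi_\sigma(f_i)|<\alpha_i-\tfrac1n$; this is vacuously impossible whenever $\alpha_i\le \tfrac1n$, i.e.\ for every sufficiently thin slice, no matter how $\sigma$ is chosen. The Khintchine averaging does not help: since $|f_i(w_k^*/n)|\le 1/n$ the variance bound only gives $|\psi_\sigma(f_i)|$ of order $1/\sqrt n$ for a typical $\sigma$, which is far larger than $\alpha_i-\tfrac1n$ when $\alpha_i$ is small (and in any case one cannot make $|\psi_\sigma(f_i)|$ smaller than the deterministic shrinkage loss $\tfrac1n$ that the factor $(1-\tfrac1n)$ already costs you). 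The structural obstruction is that your perturbation has a \emph{fixed} norm $\ge \tfrac{1}{ne^\beta}$ but also requires a \emph{fixed} amount of room $\tfrac1n$ inside the slice, while the slice parameters $\alpha_i$ are arbitrary; any argument of this shape must let the displacement in the ``slice direction'' tend to $0$ while the mutual distance of the two points stays bounded below. The final sentence about exploiting $\delta_{y^*}+\delta_{-y^*}$ is only a declaration of intent, not an argument.

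The paper resolves exactly this tension in Proposition \ref{prop:bigccslice} by a different mechanism. Starting from $\sum_j\gamma_{ij}\delta_{x_{ij}^*}\in A\cap S_i$, one perturbs the functionals $x_{ij}^*$ themselves by an arbitrarily small amount to obtain two families $y_{ij}^*$ and $z_{ij}^*$ that are pairwise linearly independent; norm continuity of the functions in $FBL[E]$ (here finite-dimensionality of $E$ is used) keeps both resulting elements inside $S_i$ for \emph{every} $\alpha_i>0$. The separation then comes not from a prescribed perturbation vector but from Lemma \ref{ThmNormaEfinitedimensional}: via the de Pagter--Wickstead identification of $FBL[\ell_1(n)]$ with $\mathcal C(S_{\ell_\infty(n)})$, Dirac-type functionals at pairwise independent points satisfy an $\ell_1$-type lower bound $\|\sum\gamma_i\delta_{x_i^*}\|\ge \tfrac{1}{ne^\beta}\sum|\gamma_i|\,\|x_i^*\|$, so the two convex combinations are automatically at distance $\ge \tfrac{2}{ne^\beta}=\alpha$ regardless of how thin the slices are. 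The corollary then follows by invoking \cite[Proposition 1.11]{dgz} (a step your proposal also omits, though it is routine). If you want to salvage your construction, you would need to replace the rigid pair $(1-\tfrac1n)\phi_i\pm\psi_\sigma$ by perturbations whose effect on the $f_i$'s can be made arbitrarily small while their mutual distance stays bounded below --- which is precisely what the independence-plus-$C(K)$ argument delivers.
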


\bigskip

The case $E=\ell_1(\Gamma)$ has a special interest, since in this case the free Banach lattice generated by $E$ coincides with the free Banach lattice generated by the set $\Gamma$  (see \cite[Corollary 2.8]{ART18}). For this case the following holds.
\begin{cor}
	Let $\Gamma$ be a set with more than one point. Then, the norm of $FBL[\ell_1(\Gamma)]$ is rough. In particular, it is nowhere Fr\'echet differentiable.
\end{cor}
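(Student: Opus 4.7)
The plan is to split the argument into two cases depending on whether $\Gamma$ is finite or infinite, and in each case invoke a different result already proved in the paper.

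First, if $\Gamma$ is finite, I would directly apply the preceding Corollary~\ref{Cororudez}. Since $|\Gamma|\geq 2$, the space $\ell_1(\Gamma)$ is a finite-dimensional Banach space with $\dim(\ell_1(\Gamma))=|\Gamma|\geq 2$, so Corollary~\ref{Cororudez} yields immediately that the norm of $FBL[\ell_1(\Gamma)]$ is rough (and hence nowhere Fr\'echet differentiable).

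Second, for $\Gamma$ infinite, my strategy is to first show that the norm of $FBL[\ell_1(\Gamma)]$ is octahedral via Theorem~\ref{theo:cuninxInfiinfi}, and then deduce roughness from that. To apply Theorem~\ref{theo:cuninxInfiinfi} I need $C(\ell_1(\Gamma)^{(\infty})$ to be infinite-dimensional, which follows from item~(1) of Example~\ref{exam:aplicunin} once I observe that $\ell_1(\Gamma)$ is a non-reflexive $L_1(\mu)$-space (for $\mu$ the counting measure on $\Gamma$); non-reflexivity is clear because any reflexive $L_1(\mu)$-space must be finite-dimensional. Theorem~\ref{theo:cuninxInfiinfi} then gives that the norm of $FBL[\ell_1(\Gamma)]$ is octahedral.

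To close the argument I would pass from octahedrality to roughness. As recalled in Section~\ref{sect:notation}, octahedrality of the norm of a Banach space $X$ is equivalent (by \cite[Theorem 2.1]{blrjfa}) to the $w^*$-SD2P of $X^*$, so in particular every $w^*$-slice of $B_{X^*}$ has diameter $2$. Hence $B_{FBL[\ell_1(\Gamma)]^*}$ does not admit $w^*$-slices of arbitrarily small diameter, and by \cite[Proposition 1.11]{dgz} the norm of $FBL[\ell_1(\Gamma)]$ is rough.

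I do not foresee any serious technical obstacle in this proof, since both cases reduce almost immediately to results already proved earlier in the paper. The main (minor) care needed is just to verify that the hypotheses of Example~\ref{exam:aplicunin}(1) are met in the infinite case, which is routine.
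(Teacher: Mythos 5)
Your proposal is correct and follows essentially the same route as the paper: the finite case via Corollary~\ref{Cororudez}, and the infinite case via Example~\ref{exam:aplicunin}(1) and Theorem~\ref{theo:cuninxInfiinfi} to get octahedrality, then roughness. The only cosmetic difference is that you derive the final implication (octahedral $\Rightarrow$ rough) through the $w^*$-SD2P characterization and \cite[Proposition 1.11]{dgz}, whereas the paper cites \cite[p.~78]{dgz} directly for the fact that octahedral norms are $2$-rough; both are valid.
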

\begin{proof}
When $\Gamma$ is finite, the result follows from Corollary \ref{Cororudez}. If $\Gamma$ is infinite then $FBL[\ell_1(\Gamma)]$ has an octahedral norm by Theorem \ref{theo:cuninxInfiinfi} and Example \ref{exam:aplicunin}. Consequently, the norm of $FBL[\ell_1(\Gamma)]$ is $2$-rough (see \cite[p.~78]{dgz}).
\end{proof}

\bigskip

In order to prove Proposition \ref{prop:bigccslice}, we need some technical lemmas. In \cite[Proposition 5.3]{dPW15} de Pagter and Wickstead proved that free Banach lattices generated by finite sets are lattice isomorphic to $\mathcal{C}(K)$-spaces. We reproduce part of the idea of the proof, in terms of $FBL[\ell_1(n)]$, in the paragraph below for the sake of completeness and to deal with the constants and the explicit isomorphism appearing in the proof.

Let $n\in\N$ be fixed. Notice that $S_{\ell_\infty(n)}$ is a compact space with the $w^*$-topology.
The operator $R\colon FBL[\ell_1(n)] \longrightarrow C(S_{\ell_\infty(n)})$ given by $Rf=f|_{S_{\ell_\infty(n)}}$ is a surjective Banach lattice isomorphism. De Pagter and Wickstead proved that 
$$  \frac{1}{n} \| f \| \leq  \|R(f) \|_\infty \leq || f || $$
for every $f \in FBL[\ell_1(n)]$. The fact that $R$ is surjective follows from the lattice version of the Stone-Weierstrass Theorem, since $R(FBL[\ell_1(n)])$ is a closed (linear) sublattice of $C(S_{\ell_\infty(n)})$ that separates points, and contains the constant function $1=R( |\delta_{e_1}|\vee|\delta_{e_2}|\vee \ldots \vee |\delta_{e_n}|)$. Indeed, since $\||\delta_{e_1}|\vee|\delta_{e_2}|\vee \ldots \vee |\delta_{e_n}|\|=n$ (see the comments below \cite[Proposition 5.3]{dPW15}),  it follows from the monotony of the norm and the fact that $f \leq 1 $ for every $f \in  B_{C(S_{\ell_\infty(n)})}$ that
\begin{equation*}
\|R\|=1 \mbox{ and } \|R^{-1}\|=n.
\end{equation*}

\noindent
With the previous notation in mind, we get the following lemmas.

\begin{lem}
	\label{LemNorma}
	Let $n\in \N$. Then, for every $\gamma_1,\ldots, \gamma_m \in \R$, we have $$ \left \| \sum_{i=1}^m \gamma_i \delta_{x_i^*}\right \| \geq \frac{1}{n} \sum_{i=1}^{m} |\gamma_i|,$$
	whenever $x_1^*,\ldots,x_m^* \in S_{\ell_\infty(n)}$ are all different from each other.  
\end{lem}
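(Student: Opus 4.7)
The plan is to exploit the lattice isomorphism $R\colon FBL[\ell_1(n)] \to \mathcal{C}(S_{\ell_\infty(n)})$ that was recalled just before the lemma, where $R(f) = f|_{S_{\ell_\infty(n)}}$, $\|R\|=1$, and $\|R^{-1}\|=n$. The strategy is to transfer the problem via duality to the much more transparent setting of a $\mathcal{C}(K)$-space, where linear combinations of distinct Dirac measures have an obvious norm.

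First, I would identify what $(R^{-1})^*$ does to the functionals $\delta_{x^*}$ for $x^* \in S_{\ell_\infty(n)}$. For any $g \in \mathcal{C}(S_{\ell_\infty(n)})$,
\begin{equation*}
(R^{-1})^*(\delta_{x^*})(g) = \delta_{x^*}(R^{-1}g) = (R^{-1}g)(x^*) = R(R^{-1}g)(x^*) = g(x^*),
\end{equation*}
so $(R^{-1})^*(\delta_{x^*})$ is precisely the evaluation functional $\mathrm{ev}_{x^*}$ at $x^*$ on $\mathcal{C}(S_{\ell_\infty(n)})$. By linearity,
\begin{equation*}
(R^{-1})^*\!\left(\sum_{i=1}^{m}\gamma_i\delta_{x_i^*}\right)=\sum_{i=1}^m \gamma_i\,\mathrm{ev}_{x_i^*}.
\end{equation*}

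Second, since the points $x_1^*,\ldots,x_m^*$ are pairwise distinct, the Dirac measures $\mathrm{ev}_{x_i^*}$ are mutually singular elements of $\mathcal{C}(S_{\ell_\infty(n)})^*$. By Riesz representation (or simply by evaluating against a continuous function that is close to $\mathrm{sgn}(\gamma_i)$ on a small neighbourhood of $x_i^*$ and small elsewhere, using the fact that $S_{\ell_\infty(n)}$ is a compact Hausdorff space in which distinct points can be separated by disjoint neighbourhoods), the total variation norm of $\sum \gamma_i\,\mathrm{ev}_{x_i^*}$ is exactly $\sum_{i=1}^m |\gamma_i|$.

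Third, combining both observations and using $\|(R^{-1})^*\|=\|R^{-1}\|\leq n$,
\begin{equation*}
\sum_{i=1}^m |\gamma_i| = \left\|\sum_{i=1}^m \gamma_i\,\mathrm{ev}_{x_i^*}\right\| = \left\|(R^{-1})^*\!\left(\sum_{i=1}^{m}\gamma_i\delta_{x_i^*}\right)\right\| \leq n \left\|\sum_{i=1}^m \gamma_i\delta_{x_i^*}\right\|,
\end{equation*}
which rearranges to the desired inequality. I do not anticipate a real obstacle in this argument; the only point requiring a small verification is the identification $(R^{-1})^*(\delta_{x^*}) = \mathrm{ev}_{x^*}$, and everything else is a routine consequence of duality together with the already-recorded bound $\|R^{-1}\|\leq n$.
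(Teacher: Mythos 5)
Your proposal is correct and is essentially the paper's own argument: the paper likewise transfers the functional to $\mathcal{C}(S_{\ell_\infty(n)})^*$ via the adjoint of the de Pagter--Wickstead isomorphism (writing $\sum\gamma_i\delta_{x_i^*}=R^*(\sum\gamma_i\tilde\delta_{x_i^*})$ and using $\|(R^*)^{-1}\|=n$, which is the same duality step as your use of $(R^{-1})^*$), and invokes the fact that distinct Dirac functionals on a compact Hausdorff space have total variation norm $\sum|\gamma_i|$. Your extra sentence justifying that last fact via Urysohn separation is a harmless elaboration of what the paper states without proof.
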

\begin{proof}
For each $x_i^* \in S_{\ell_\infty(n)}$, set $\tilde{\delta}_{x_i^*} \in C(S_{\ell_\infty(n)})$ the corresponding evaluation functional. Notice that 
\begin{equation*} 
\left  \| \sum_{i=1}^m \gamma_i \tilde{\delta}_{x_i^*}\right  \| =  \sum_{i=1}^m |\gamma_i|
\end{equation*} 
Moreover,
\begin{equation*} 
R^*(\tilde{\delta}_{x_i^*})(f)=x_i^*(Rf)=f(x_i^*)=\delta_{x_i^*}(f) \mbox{ for every } f \in FBL[\ell_1(n)],
\end{equation*} 
i.e., $R^*(\tilde{\delta}_{x_i^*})=\delta_{x_i^*}$ for every $i \leq m$. But then, bearing in mind that $\|(R^*)^{-1}\|=n$, we conclude that 
\begin{equation*} 
\left\| \sum_{i=1}^m \gamma_i \delta_{x_i^*} \right\| = \left\| R^*\left(\sum_{i=1}^m \gamma_i  \tilde{\delta}_{x_i^*}\right)\right \| \geq \frac{1}{n}\left \| \sum_{i=1}^m \gamma_i  \tilde{\delta}_{x_i^*} \right \| = \frac{1}{n} \sum_{i=1}^m |\gamma_i|.
\end{equation*} 
\end{proof}

\begin{lem}
	\label{ThmNormaEfinitedimensional}
	Let $n\in \N$ and $E$ be a Banach space of dimension $n$. Let $\beta$ be the Banach-Mazur distance between $E$ and $\ell_1(n)$. Then,  for every $\gamma_1,\ldots, \gamma_m \in \R$, we have $$ \left \| \sum_{i=1}^m \gamma_i \delta_{x_i^*}\right \| \geq \frac{1}{n e^\beta} \sum_{i=1}^{m} |\gamma_i|\|x_i^*\|,$$
	whenever $x_1^*,\ldots,x_m^* \in E^*$ are pairwise independent.  
\end{lem}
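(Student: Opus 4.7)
The plan is to reduce the estimate to Lemma \ref{LemNorma} by pulling the problem back from $FBL[E]^{*}$ to $FBL[\ell_{1}(n)]^{*}$ through a near-optimal Banach--Mazur isomorphism. Since $E$ is finite-dimensional the Banach--Mazur distance is attained, so I fix an isomorphism $T\colon E\to \ell_{1}(n)$ with $\|T\|\,\|T^{-1}\|=e^{\beta}$ and, after rescaling, assume $\|T\|=1$ and $\|T^{-1}\|\le e^{\beta}$. By the universal property of the free Banach lattice, $T$ lifts to a lattice homomorphism $\hat{T}\colon FBL[E]\to FBL[\ell_{1}(n)]$ with $\hat{T}(\delta_{x})=\delta_{T(x)}$ and $\|\hat{T}\|=1$; applying the same recipe to $T^{-1}$ and invoking the uniqueness in the universal property, the two liftings are mutual inverses, so $\hat{T}$ is in fact a lattice isomorphism with $\|\hat{T}^{-1}\|\le e^{\beta}$.

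The key step---and the main obstacle---is the identity
\begin{equation*}
\hat{T}^{*}(\delta_{y^{*}})=\delta_{T^{*}(y^{*})}\qquad \text{for every } y^{*}\in \ell_{\infty}(n),
\end{equation*}
where $T^{*}\colon \ell_{\infty}(n)\to E^{*}$ is the Banach-space adjoint of $T$. Because $\delta_{(\cdot)}$ is only positively homogeneous and certainly not linear in its subscript, this cannot be read off from a one-line adjoint computation; it must be checked directly. Both sides are bounded linear functionals on $FBL[E]$, so it suffices to establish the equality on a set with dense lattice-linear span. On each generator $\delta_{x}$, both sides equal $y^{*}(T(x))$; on $|\delta_{x}|$ both equal $|y^{*}(T(x))|$; on $\delta_{x_{1}}\vee \delta_{x_{2}}$ both equal $\max\{y^{*}(T(x_{1})),y^{*}(T(x_{2}))\}$; and the lattice-homomorphism property of $\hat{T}$ propagates the agreement through every lattice-polynomial expression in the $\delta_{x}$'s. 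A density argument then closes the verification.

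Using that $T^{*}$ is a bijection, I write $x_{i}^{*}=T^{*}(y_{i}^{*})$ with $y_{i}^{*}\in \ell_{\infty}(n)$, and pairwise linear independence transfers to the $y_{i}^{*}$'s. Linearity of $\hat{T}^{*}$ gives $\sum_{i}\gamma_{i}\delta_{x_{i}^{*}}=\hat{T}^{*}\!\left(\sum_{i}\gamma_{i}\delta_{y_{i}^{*}}\right)$, so applying $(\hat{T}^{*})^{-1}=(\hat{T}^{-1})^{*}$ and bounding its norm by $e^{\beta}$ yields
\begin{equation*}
\left\|\sum_{i=1}^{m}\gamma_{i}\delta_{x_{i}^{*}}\right\|\ge e^{-\beta}\left\|\sum_{i=1}^{m}\gamma_{i}\delta_{y_{i}^{*}}\right\|.
\end{equation*}
Finally, normalizing $y_{i}^{*}=\|y_{i}^{*}\|\tilde{y}_{i}^{*}$ with $\tilde{y}_{i}^{*}\in S_{\ell_{\infty}(n)}$, pairwise linear independence forces the $\tilde{y}_{i}^{*}$ to be pairwise distinct (in fact not even equal up to sign); Lemma \ref{LemNorma} then bounds the right-hand side below by $n^{-1}\sum_{i}|\gamma_{i}|\,\|y_{i}^{*}\|$, and the inequality $\|y_{i}^{*}\|\ge \|x_{i}^{*}\|/\|T\|=\|x_{i}^{*}\|$ delivers the claim.
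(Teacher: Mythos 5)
Your argument is correct and is essentially the paper's proof: both pull the estimate back to $FBL[\ell_1(n)]$ via the lattice homomorphism $\hat{T}$ induced by a Banach--Mazur isomorphism, use the identity $\hat{T}^*(\delta_{y^*})=\delta_{T^*y^*}$ together with positive homogeneity of the $\delta$'s, and then invoke Lemma \ref{LemNorma}. The only differences are cosmetic (you orient $T$ from $E$ to $\ell_1(n)$ and use attainment of the distance plus a lower bound via $\hat{T}^{-1}$, where the paper bounds $\|\hat{T}^*\|$ from above and takes an infimum over $T$ at the end), and you verify the adjoint identity on the generating lattice explicitly, which the paper merely asserts.
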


\begin{proof}
Let $T\colon \ell_1(n) \longrightarrow E$ be an isomorphism. We have that $\frac{\|x\|}{\|T^{-1}\|} \leq \|Tx\| \leq \|T\| \|x\|$ for every $x\in \ell_1(n)$.
By the universal property of free Banach lattices, $T$ extends uniquely to a lattice homomorphism $\hat{T}\colon FBL[\ell_1(n)] \longrightarrow FBL[E]$ such that $\hat{T}(\delta_x)=\delta_{Tx}$ for every $x\in \ell_1(n)$ and $\|\hat{T}\|=\|T\|$.
Moreover, $\hat{T}^*(\delta_{x^*})=\delta_{T^*x^*}$ for every $x^* \in E^*$. 
Thus, if $x_1^*,\ldots,x_m^* \in E^*$ are pairwise independent elements, then
\begin{equation*} 
\left \| \sum_{i=1}^m \gamma_i \delta_{T^*x_i^*} \right\| =\left \| \hat{T}^*\left(\sum_{i=1}^m \gamma_i \delta_{x_i^*}\right) \right\| \leq \|T\| \left\| \sum_{i=1}^m \gamma_i \delta_{x_i^*} \right\|.
\end{equation*}
Now, by Lemma \ref{LemNorma}, we have
\begin{eqnarray*} 
\left \| \sum_{i=1}^m \gamma_i \delta_{T^*x_i^*} \right\| &=& \left \| \sum_{i=1}^m \gamma_i \|T^*x_i^*\|\delta_{\frac{T^*x_i^*}{\|T^*x_i^*\|}} \right\| \\
&\geq& \frac{1}{n} \sum_{i=1}^m |\gamma_i| \|T^*x_i^*\| \\
&\geq&  \frac{1}{n\|T^{-1}\|} \sum_{i=1}^m |\gamma_i| \|x_i^*\| \\
&=& \frac{1}{n\|T^{-1}\|} \sum_{i=1}^m |\gamma_i| \|\delta_{x_i^*}\|.
\end{eqnarray*} 
Thus, 
\begin{equation*} 
\left\| \sum_{i=1}^m \gamma_i \delta_{x_i^*} \right\| \geq  \frac{1}{\|T\|} \left \| \sum_{i=1}^m \gamma_i \delta_{T^*x_i^*} \right\| \geq\frac{1}{n\|T^{-1}\|\|T\|} \sum_{i=1}^m |\gamma_i| \|\delta_{x_i^*}\|.
\end{equation*} 
Since the last inequality holds for any isomorphism $T\colon \ell_1(n) \longrightarrow E$, the conclusion follows from the definition of the Banach-Mazur distance.
\end{proof}	

\begin{proof}[Proof of Proposition \ref{prop:bigccslice}]

Set $C :=\sum_{i=1}^k \lambda_i S_i$ to be any convex combination of $w^*$-slices of $B_{FBL[E]^*}$. 
Recall that the set \begin{equation*} 
	A:= \left\{\sum_{i=1}^n \xi_i\delta_{x_i^*} \in FBL[E]^*: x_i^*\in E^*, \xi_i\in \{-1,1\}, \sup\limits_{x\in B_E} \sum_{i=1}^n \vert x_i^*(x)\vert\leq 1 \right\}.
\end{equation*} 
is norming, so $B_{FBL[E]^*}=\overline{co(A)}^{w^*}$. Since $B_{FBL[E]^*}\setminus S_i$ is convex and $w^*$-closed, this implies that, for every $i\in\{1,\ldots, k\}$, there exists an element $\sum_{j=1}^{n_i}\gamma_{ij} \delta_{x_{ij}^*}\in A\cap S_i$.
Now, since $n>1$, we can find pairwise independent functions $y_{ij}^*, z_{ij}^* \in B_{E^*}$ for every $i\in\{1,\ldots, k\}$ and $j\in\{1,\ldots, n_i\}$ such that each $y_{ij}^*$ and each $z_{ij}^*$ are arbitrarily close to $x_{ij}^*$. In particular, since the functions in $FBL[E]$ are norm continuous (because $E$ is finite-dimensional), we can assume that $\sum_{j=1}^{n_i} \gamma_{ij}' \delta_{y_{ij}^*} $ and $\sum_{j=1}^{n_i} \gamma_{ij}'' \delta_{z_{ij}^*} $ belong to $S_i$ and have norm one for every $i\in\{1,\ldots, k\}$. Now, it follows from Lemma \ref{ThmNormaEfinitedimensional} that
\begin{equation*} 
\left \| \sum_{i=1}^k\lambda_i \sum_{j=1}^{n_i} \gamma_{ij}' \delta_{y_{ij}^*} - \sum_{i=1}^k \lambda_i \sum_{j=1}^{n_i} \gamma_{ij}'' \delta_{z_{ij}^*} \right \| \geq \frac{1}{n e^\beta} \sum_{i=1}^k\sum_{j=1}^{n_i} \lambda_i  (|\gamma_{ij}'| \|y_{ij}^*\|+|\gamma_{ij}''| \|z_{ij}^*\|).
\end{equation*} 
But 
\begin{equation*} 
1= \left\|\sum_{j=1}^{n_i} \gamma_{ij}' \delta_{y_{ij}^*} \right\| \leq \sum_{j=1}^{n_i} |\gamma_{ij}'| \|y_{ij}^*\| 
\end{equation*} 
and, analogously,  
\begin{equation*} 
1= \left\| \sum_{j=1}^{n_i} \gamma_{ij}'' \delta_{z_{ij}^*} \right\| \leq \sum_{j=1}^{n_i} |\gamma_{ij}''| \|z_{ij}^*\|.
\end{equation*}
Thus, we conclude that 
\begin{equation*} 
\left \| \sum_{i=1}^k\lambda_i \sum_{j=1}^{n_i} \gamma_{ij}' \delta_{y_{ij}^*} - \sum_{i=1}^k \lambda_i  \sum_{j=1}^{n_i} \gamma_{ij}'' \delta_{z_{ij}^*}  \right \| \geq \frac{1}{ne^\beta} \sum_{i=1}^k 2\lambda_i  =\frac{2}{ne^\beta}=\alpha
\end{equation*} 
and so $C$ has diameter at least $\alpha$, as desired.
\end{proof}

\section*{Acknowledgements} We thank Pedro Tradacete for the fruitful discussions held on the early phase of this work. We also thank Antonio Avil\'es and Jos\'e Rodr\'iguez for pointing out typos and for comments that have improved the exposition of the text. Finally, we thank Johann Langemets for pointing out Remark \ref{remark:johann}.

\setlength{\parskip}{4mm}

%
%

\end{document}